\documentclass[a4paper,12pt]{amsart}
\usepackage[utf8]{inputenc}
\usepackage[T1]{fontenc}
\usepackage{fullpage}
\usepackage[english]{babel}
\usepackage{mathrsfs}
\usepackage{pgfplots}
\usepackage{amsmath, amsthm, amssymb}
\usepackage[retainorgcmds]{IEEEtrantools} 
\title{Hardy-Orlicz Spaces of conformal densities}
\author{Sita Benedict}
\newtheorem{theorem}{Theorem}[section]
\newtheorem{lemma}[theorem]{Lemma}

\newtheorem{corollary}[theorem]{Corollary}

\newtheorem*{ghthm}{Gehring-Hayman Theorem}
\newtheorem*{theorem*}{Theorem}
\newtheorem*{proposition*}{Proposition}
\newtheorem*{corollary*}{Corollary}
\newtheorem*{remark*}{Remark}
\numberwithin{equation}{section}
\newcommand{\C}{\mathbb{C}} 
\newcommand{\Bn}{\mathbb{B}^{n}}
\newcommand{\Bt}{\mathbb{B}^{2}}
\newcommand{\Rn}{\mathbb{R}^{n}}

\newcommand{\Sn}{\mathbb{S}^{n-1}}

\newcommand{\omn}{\omega_{n-1}}
\begin{document}
\begin{abstract}
We define and prove characterizations of Hardy-Orlicz spaces of conformal densities. 
\end{abstract}

\footnotetext{
{\it 2010 Mathematics Subject Classification: 30C35, 30H10}\\
{\it Key words and phrases. Hardy spaces, Hardy-Orlicz, conformal densities}
\endgraf The author was partially supported by the Academy of Finland grants
131477 and 263850.}

\maketitle
\section{introduction}

In \cite{myfirstpaper} the authors defined a new type of Hardy-Orlicz space by considering the internal path distance from $f(x)$ to $f(0)$ in place of the euclidean distance $|f(x)|$, where $f$ is a conformal mapping of $\mathbb{B}^2$. The internal distance between two points $f(x), f(y) \in f(\mathbb{B}^2)$ is formally
\begin{eqnarray*}
d_I(f(x), f(y)) = \inf_\gamma \int_\gamma |f'|ds,
\end{eqnarray*}
where the infimum is taken over all curves in $\mathbb{B}^2$ with endpoints $x$ and $y$. Thus $d_I$ is a metric on $f(\mathbb{B}^2)$, but it can equivalently be thought of as a metric on $\mathbb{B}^2$ associated with the conformal mapping $f$. Since the definition depends on $|f'|$ and not on $f$ we can think of $|f'|$ as a special kind of \textit{density} on $\mathbb{B}^2$ and ask what are the properties of $|f'|$ that are actually needed to develop the theory of intrinsic Hardy-Orlicz spaces. In general, a density on $\Bn$ is simply a Borel measurable function $\rho:\Bn \rightarrow [0,\infty]$. For a given density $\rho$ the $\rho$-length of a curve $\gamma$ in $\Bn$  is
\begin{align*}
\textnormal{length}_\rho(\gamma) = \int_\gamma \rho(x) ds,
\end{align*}
where $ds$ denotes integration with respect to arc length. If $\rho$ is continuous and strictly positive we can define the metric $d_\rho$ on $\Bn$ by setting
\begin{eqnarray*}
d_\rho(x,y) = \inf \textnormal{length}_\rho(\gamma), \;\;x,y\in\Bn,
\end{eqnarray*}
where the infimum is taken over all curves $\gamma \subset \Bn$ with endpoints $x$ and $y$.

It was shown in \cite{conformalmetrics} that a continuous density $\rho:\Bn \rightarrow (0,\infty)$ need only satisfy two simple conditions so that a number of classical results from geometric function theory in the plane generalize to the setting of conformal densities on $\Bn$. The first is a Harnack-type inequality (HI), where $\rho$ does not variate much on Whitney-type scales in the ball. We say that $\rho$ satisfies HI($A)$ if there exists a constant $A \geq 1$ such that for all $z\in \Bn$
\begin{align*}
1/A \leq \frac{\rho(x)}{\rho(y)} \leq A, \textnormal{whenever} \;x,y\in B_z = B(z, 1-|z|/2).
\end{align*}
The second condition is a Volume Growth condition (VG). To state this we associate with $\rho$ a Borel measure $\mu_\rho$ on $\Bn$ by setting
\begin{eqnarray*}
\mu_\rho(E) = \int_E \rho^n dx
\end{eqnarray*}
for each Borel set $E \subseteq \Bn$, and we say that $\rho$ satisfies condition VG$(B)$ if there exists a constant $B > 0$ such that
\begin{eqnarray*}
\mu_\rho(B_\rho(x,r)) \leq Br^n
\end{eqnarray*}
for all $x\in\Bn$ and $r>0$. A \textit{conformal density} on $\Bn$ is then any continuous density $\rho:\Bn \rightarrow (0,\infty)$ satisfying both HI$(A)$ and VG$(B)$. 

It is simple to show that $|f'|$ is a conformal density with constants $A = e^{12}$ and $B=\pi$ whenever $f$ is conformal mapping of $\Bt$. If $g:\Bn \rightarrow \Rn$ is quasiconformal, then it can be shown that the averaged derivative of $g$, usually denoted as $a_g$, is a conformal density on $\Bn$. For details and other examples of conformal densities, including ones that do not arise from a quasiconformal mapping, see \cite{conformalmetrics}.

We show in this paper that the same two conditions HI($A$) and VG($B$) are sufficient to develop a Hardy-Orlicz space theory for conformal densities on the unit ball in $\Rn$. 

Let $\psi: [0, \infty] \rightarrow [0, \infty]$ be a strictly increasing, differentiable function with $\psi(0) = 0$, or \textit{growth function} for short. We say that a conformal density $\rho$ on $\Bn$ belongs to the Hardy-Orlicz space $H^\psi$ if there is a $\delta > 0$ such that 
\begin{eqnarray*}
\sup_{0 < r < 1}\int_{\Sn}\psi(\delta |(r\omega)|_\rho)d\sigma < \infty,
\end{eqnarray*} 
where the distance $|(r\omega)|_\rho$ between $r\omega$ and 0 is the one induced by $\rho$, and $\sigma$ is the n-1-dimensional surface measure on $\Sn$. If $\psi(t) = t^p$ for a given $0 < p < \infty$ then we simply denote the corresponding Hardy space with the symbol $H^p$. Our first result gives several characterizations of these spaces that hold for all growth functions $\psi$.

\begin{theorem}\label{mainthm}
Let $\rho$ be a conformal density on $\Bn$ and $\psi$ a growth function. Then the following are equivalent:
\begin{enumerate}
\item $\rho \in H^\psi$
\item $\psi(\delta_1 |\omega|_\rho) \in L^1(\Sn)$ \textnormal{for some} $\delta_1 > 0$
\item $\psi(\delta_2 \rho^*(\omega)) \in L^1(\Sn)$ \textnormal{for some} $\delta_2 > 0$
\item $(1-r)^{n-2}\psi(\delta_3 M(r,\rho)) \in L^1((0,1))$ \textnormal{for some} $\delta_3 > 0$.
\end{enumerate}
\end{theorem}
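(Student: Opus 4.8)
The plan is to prove the four statements are equivalent through the chain $(1)\Leftrightarrow(2)\Leftrightarrow(3)\Leftrightarrow(4)$, exploiting throughout the fact that each item only asks for the existence of \emph{some} $\delta>0$. This lets me absorb every multiplicative constant produced by HI($A$), VG($B$) and the Gehring--Hayman theorem (for conformal densities, from \cite{conformalmetrics}) directly into the argument of $\psi$. I also record at the outset that every curve from $0$ to the boundary has $\rho$-length at least some $c_0>0$, by continuity and positivity of $\rho$ near the origin, so that $\inf_{\omega\in\Sn}|\omega|_\rho\ge c_0$; consequently an additive constant $c$ satisfies $|\omega|_\rho + c \le (1+c/c_0)|\omega|_\rho$ and is likewise absorbed into $\delta$. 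These two reductions mean each implication collapses to a \emph{pointwise} or \emph{level-set} comparison of the relevant quantities up to universal constants, after which monotonicity of $\psi$ and a rescaling of $\delta$ close the argument.

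For $(1)\Leftrightarrow(2)$ I would first use the Gehring--Hayman theorem to compare the geodesic distance $|r\omega|_\rho$ with the $\rho$-length $\int_0^r \rho(t\omega)\,dt$ of the radial segment, which is nondecreasing in $r$ and increases to the full radial length $\int_0^1\rho(t\omega)\,dt$, itself comparable to $|\omega|_\rho$. Thus $r\mapsto \int_{\Sn}\psi(\delta|r\omega|_\rho)\,d\sigma$ is, up to the Gehring--Hayman constant, comparable to a nondecreasing function of $r$, so its supremum is its limit as $r\to1$; monotone convergence then identifies that limit with $\int_{\Sn}\psi(\delta|\omega|_\rho)\,d\sigma$, yielding $(1)\Leftrightarrow(2)$ after adjusting $\delta$.

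For $(2)\Leftrightarrow(3)$ the goal is the two-sided pointwise comparison $c\,|\omega|_\rho \le \rho^*(\omega)\le C\,|\omega|_\rho$. The lower bound is immediate since the radial ray lies in the nontangential approach region. For the upper bound I would join an arbitrary nontangential point $x$ with $|x|=r$ to the radial point $r\omega$ by a short arc and estimate its $\rho$-length with HI($A$): since $\rho$ varies by at most the factor $A$ on each Whitney ball, the $\rho$-distance between $x$ and $r\omega$ is controlled by $|r\omega|_\rho$, so $|x|_\rho\le C|r\omega|_\rho\le C'|\omega|_\rho$. Taking the supremum over the cone and absorbing $C'$ into $\delta$ transfers the $L^1$ integrability of $\psi(\delta\,\cdot)$ between (2) and (3).

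The heart of the argument, and the step I expect to be the main obstacle, is $(3)\Leftrightarrow(4)$, where the weight $(1-r)^{n-2}$ must be produced; I would work with distribution functions. Fix a threshold $\lambda$: a point $\omega$ lies in $\{\rho^*>\lambda\}$ exactly when some nontangential $x$ with $|x|_\rho>\lambda$ lies over $\omega$, and by HI($A$) such an $x$ at height $r$ forces $|r\omega'|_\rho\gtrsim\lambda$ for every $\omega'$ in a spherical cap about $x/|x|$ of radius $\sim(1-r)$, hence of $\sigma$-measure $\sim(1-r)^{n-1}$. Summing these cap contributions over the Whitney scales (which have density $dr/(1-r)$) on which $M(r,\rho)>\lambda$ gives $\sigma(\{\rho^*>\lambda\})\sim\int_{\{M(\cdot,\rho)>\lambda\}}(1-r)^{n-1}\,\tfrac{dr}{1-r}=\int_{\{M(\cdot,\rho)>\lambda\}}(1-r)^{n-2}\,dr$, with the overlap of the caps kept bounded by VG($B$). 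A layer-cake integration against $\psi$ then upgrades this level-set equivalence to $\int_{\Sn}\psi(\delta\rho^*)\,d\sigma\approx\int_0^1(1-r)^{n-2}\psi(\delta M(r,\rho))\,dr$, which is precisely $(3)\Leftrightarrow(4)$. The delicate points will be making the cap-and-scale bookkeeping rigorous with bounded overlap — this is exactly where VG($B$) is indispensable — and handling the arbitrary, possibly non-doubling growth function $\psi$ entirely through the layer-cake representation, rather than through any inequality imposed on $\psi$ itself.
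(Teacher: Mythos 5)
Your reductions and the equivalences $(1)\Leftrightarrow(2)\Leftrightarrow(3)$ are correct and essentially the paper's own argument: Gehring--Hayman plus HI$(A)$ give the pointwise comparisons $|r\omega|_\rho\le\rho^*(\omega)$, $\rho^*(\omega)\le C|\omega|_\rho$ and $|\omega|_\rho\le C\rho^*(\omega)$ (this is the paper's Lemma 3.1), and Fatou/monotone convergence handles the supremum in $(1)$. Your $(3)\Rightarrow(4)$ half is also correct, and it is actually a simpler route than the paper's: since $\{r:M(r,\rho)>\lambda\}=(r_\lambda,1)$, a single cap suffices --- any $x$ with $|x|_\rho>\lambda$ and $|x|$ arbitrarily close to $r_\lambda$ gives a spherical cap of radius $\sim(1-|x|)$ whose points $\omega'$ satisfy $x\in\Gamma(\omega')$, hence $\int_{\{M>\lambda\}}(1-r)^{n-2}dr\approx(1-r_\lambda)^{n-1}\lesssim\sigma(\{\rho^*>\lambda\})$ --- whereas the paper proves $(2)\Rightarrow(4)$ by building a dyadic Dirac-mass Carleson measure and invoking a separate Carleson-measure lemma (its Lemma 3.2).

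The genuine gap is the converse half, $(4)\Rightarrow(3)$, i.e.\ the upper bound $\sigma(\{\rho^*>\lambda\})\le C(1-r_{c\lambda})^{n-1}$. Your proposed mechanism --- cover $\{\rho^*>\lambda\}$ by caps attached to cone points $x$ with $|x|_\rho>\lambda$ and control the total measure by bounded overlap --- cannot produce this bound. Bounded overlap (which, incidentally, is a Euclidean property of Whitney/Vitali coverings and owes nothing to VG$(B)$) only yields $\sum_i\sigma(S_{x_i})\lesssim\sigma(\bigcup_i S_{x_i})\le\omn$; nothing in the bookkeeping excludes the possibility that at every height $r>r_\lambda$ there are $\sim(1-r)^{1-n}$ essentially disjoint such caps, in which case your cap sum is of size $\omn$ while the target $(1-r_\lambda)^{n-1}$ is small. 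Ruling this scenario out is precisely the analytic content of the implication, and it is where VG must do quantitative work. The paper does it with the modulus estimate of its Lemma 2.1: defining $r_\lambda$ by $\delta M(r_\lambda,\rho)=\lambda$, the family $\Gamma_E$ of radial segments joining $B(0,r_\lambda)\subseteq B_\rho(0,\lambda/\delta)$ to $E=\{\omega:\tfrac{\delta}{3}|\omega|_\rho>\lambda\}$ satisfies $\mathrm{Mod}(\Gamma_E)=\sigma(E)(\log(1/r_\lambda))^{1-n}$ exactly, while $\mathrm{Mod}(\Gamma_E)\le C(B,n)$ by Lemma 2.1, because each segment starts in a set of $\rho$-diameter at most $2\lambda/\delta$ and must reach $\overline{\Bn}\setminus B_\rho(0,3\lambda/\delta)$; this gives $\sigma(E)\le C(1-r_\lambda)^{n-1}$, and then the layer-cake integration you describe (using $\rho^*\le C|\omega|_\rho$, or working with $|\omega|_\rho$ directly) closes the loop. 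Without this modulus input, or some equivalent quantitative use of VG, your $(4)\Rightarrow(3)$ is unproven and the four-way equivalence does not close.
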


The definitions of the non-tangential maximal function $\rho^*$ and the maximum modulus $M(r,\rho)$ are given in Section 3. The characterizations in Theorem \ref{mainthm} are analogues to results that hold for the classical $H^p$ spaces of analytic functions on the unit disk, which follow from results in \cite{nevan}, \cite{prawitz} and \cite{hardylittlewoodineq}. The classical characterization involving the maximum modulus holds only when restricting to conformal mappings of $\Bt$ belonging to $H^p$. These results have also been proved in the more general setting of quasiconformal mappings on $\Bn$, see \cite{hpqc}. For more on the theory of the classical $H^p$ spaces see for example \cite{durenhp}. 

Other characterizations that hold for classical $H^p$ when restricting to the conformal mappings have been established in more recent years. For instance, it was established in both \cite{hpqc} and \cite{dirichlet} that if $f$ is a conformal mapping of $\Bt$ then
\begin{eqnarray}\label{classicalresult}
f\in H^p\;\;\textnormal{if and only if} \;\;\int_{\Bt} |f'(x)|^p(1-|x|)^{p-1}dx < \infty
\end{eqnarray}
for every $0 < p < \infty$. We show, as a corollary to statements proved for more general growth functions, that the corresponding statement for conformal densities is also true, see Theorem \ref{energychar} below. Our result, in combination with a theorem from \cite{myfirstpaper} which says that the internal Hardy spaces and classical Hardy space classes of conformal mappings are the same for all $0< p < \infty$, also implies the result (\ref{classicalresult}), and so our work gives an alternative, and shorter, proof to the ones found in \cite{hpqc} and \cite{dirichlet}. See Section 4.
\begin{theorem}\label{energychar}
Let $\rho$ be a conformal density of $\Bn$ and $0 < p < \infty$. Then
\begin{eqnarray*}
\rho \in H^p\;\;\textnormal{if and only if} \;\;\int_{\Bn} \rho(x)^p(1-|x|)^{p-1}dx < \infty.
\end{eqnarray*}
\end{theorem}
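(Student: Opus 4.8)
The plan is to deduce Theorem~\ref{energychar} from the general Theorem~\ref{mainthm} by specializing to the growth function $\psi(t)=t^p$ and analyzing characterization~(4), which involves the maximum modulus $M(r,\rho)$. With $\psi(t)=t^p$, condition~(4) says that $\rho\in H^p$ if and only if $(1-r)^{n-2}M(r,\rho)^p\in L^1((0,1))$, where $\delta_3$ can be absorbed into the constant since $\psi$ is homogeneous. So the heart of the matter is to show that
\begin{eqnarray*}
\int_0^1 (1-r)^{n-2}M(r,\rho)^p\,dr<\infty \quad\Longleftrightarrow\quad \int_{\Bn}\rho(x)^p(1-|x|)^{p-1}\,dx<\infty.
\end{eqnarray*}
The right-hand integral is, in polar coordinates $x=r\omega$, equal to $\int_0^1 (1-r)^{p-1} r^{n-1}\bigl(\int_{\Sn}\rho(r\omega)^p\,d\sigma(\omega)\bigr)\,dr$, so the task reduces to comparing the spherical average $\int_{\Sn}\rho(r\omega)^p\,d\sigma$ against $(1-r)^{n-1}M(r,\rho)^p$.

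First I would pin down the definition of $M(r,\rho)$ from Section~3; I expect it to be something like $M(r,\rho)=\sup_{|x|=r}\rho(x)$ or a comparable quantity measuring the size of $\rho$ on the sphere of radius $r$. The key geometric tool is the Harnack inequality HI$(A)$: because $\rho$ cannot vary by more than a factor $A$ across a Whitney ball $B_z=B(z,(1-|z|)/2)$, the value of $\rho$ at a point where the maximum is nearly attained controls $\rho$ on a whole spherical cap of angular radius comparable to $(1-r)$. Consequently the spherical average $\int_{\Sn}\rho(r\omega)^p\,d\sigma$ is bounded below by $M(r,\rho)^p$ times the measure of that cap, which is of order $(1-r)^{n-1}$. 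This gives the lower bound
\begin{eqnarray*}
(1-r)^{n-1}M(r,\rho)^p \lesssim \int_{\Sn}\rho(r\omega)^p\,d\sigma,
\end{eqnarray*}
and multiplying by $(1-r)^{p-1}r^{n-1}$ and integrating immediately shows that finiteness of the energy integral forces finiteness of the integral in~(4) (up to the harmless factor $r^{n-1}$, which is bounded above and below away from $r=0$; the behavior near $r=0$ is never an issue since both integrands are continuous there).

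For the reverse direction I would bound the spherical average from above by $M(r,\rho)^p\,\sigma(\Sn)$ trivially, but this loses the crucial $(1-r)^{n-1}$ factor and so does not directly match condition~(4). The fix is to exploit monotonicity and Harnack once more: since $\rho(r\omega)\le M(r,\rho)$ but $M(\cdot,\rho)$ is, up to the constant $A$, comparable on dyadically close radii, I can average the bound $(1-r)^{n-1}M(r,\rho)^p$ over a range of radii $r\in(1-2s,1-s)$ and use HI$(A)$ to replace pointwise values by the maximum on a nearby sphere, recovering $\int_{\Sn}\rho(r\omega)^p\,d\sigma \gtrsim (1-r)^{n-1}M(r',\rho)^p$ for a comparable radius $r'$. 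The main obstacle is precisely this reverse comparison: translating the one-dimensional condition~(4), which only sees the radial maximum, back into the full volume integral requires that the maximum on each sphere be \emph{representative} of the average on nearby spheres, and this is exactly what HI$(A)$ guarantees but must be set up carefully with the Whitney geometry so that caps of the right angular size and radial thickness are used. Once the two-sided estimate
\begin{eqnarray*}
\int_{\Sn}\rho(r\omega)^p\,d\sigma \asymp (1-r)^{n-1}M(r,\rho)^p
\end{eqnarray*}
is established (with implicit constants depending only on $n$, $p$, and $A$, and with radii possibly shifted by a bounded Whitney factor), substituting into the energy integral and comparing with~(4) finishes the proof. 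I would present the argument as a short lemma isolating this equivalence, then conclude by invoking Theorem~\ref{mainthm}(4).
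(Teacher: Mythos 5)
Your proposal breaks down at its core estimate, and the trouble starts with the definition you guessed: in the paper, $M(r,\rho)=\sup_{|x|\le r}|x|_\rho$ is the maximum of the \emph{induced distance} $d_\rho(\cdot,0)$ --- by the Gehring--Hayman theorem comparable to the worst-ray integral $\sup_{\omega\in\Sn}\int_0^r\rho(t\omega)\,dt$ --- and not $\sup_{|x|=r}\rho(x)$. More importantly, the two-sided bound on which your whole reduction rests,
\begin{equation*}
\int_{\Sn}\rho(r\omega)^p\,d\sigma \approx (1-r)^{n-1}M(r,\rho)^p,
\end{equation*}
is false under either reading of $M$. Take $\rho\equiv 1$, which is a conformal density: the left-hand side equals $\omega_{n-1}$ for every $r$, while the right-hand side tends to $0$ as $r\to 1$. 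Under your reading, HI$(A)$ does give the lower bound (a cap of measure $\approx(1-r)^{n-1}$ around a near-maximizing point), but the upper bound would force the spherical integral of $\rho^p$ to concentrate on a single Whitney cap, and HI$(A)$, being a purely local (Whitney-scale) condition, cannot do that; no dyadic averaging or shifting of radii repairs a discrepancy that is of size $(1-r)^{1-n}$ uniformly in $r$. There is also an internal mismatch: even granting your estimate, substituting it into the polar-coordinate form of the energy integral produces the weight $(1-r)^{p-1}(1-r)^{n-1}=(1-r)^{n+p-2}$, not the weight $(1-r)^{n-2}$ appearing in condition (4) of Theorem~\ref{mainthm}; your reduction closes up only if one additionally pretends $|x|_\rho\approx\rho(x)(1-|x|)$, which is equally false (the distance accumulates contributions of size $\rho(x)(1-|x|)$ along the whole ray --- again $\rho\equiv1$, where $|x|_\rho=|x|$ while $\rho(x)(1-|x|)=1-|x|$).

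The paper does not go through condition (4) at all, and the hard direction is genuinely global rather than sphere-by-sphere. For ``energy integral finite $\Rightarrow\rho\in H^p$'' it proves Lemma~\ref{doubling}: for doubling $\psi$ (in particular $\psi(t)=t^p$), finiteness of $\int_{\Bn}\psi(\rho(x)(1-|x|))\frac{dx}{1-|x|}$ first yields $\psi(v)\in L^1(\Sn)$ for the nontangential quantity $v(\omega)=\sup_{x\in\Gamma(\omega)}\rho(x)(1-|x|)$, via a sub-mean-value estimate and Fubini; then a distributional inequality for $\sigma(\{\omega:|\omega|_\rho>2\lambda\})$ is extracted from a Whitney decomposition of $\{\rho^*>\lambda\}$ combined with the modulus estimate of Lemma~\ref{moduluslemma2}, and the maximal-function term is absorbed using Lemma~\ref{nontanglemma} (applied to the dilated densities $\rho_t$). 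For the converse it uses characterization (2) of Theorem~\ref{mainthm}, the Gehring--Hayman bound $\int_0^1\rho(t\omega)\,dt\le C|\omega|_\rho$, and either superadditivity of $t^p$ for $p\ge1$ (Lemma~\ref{convex}) or concavity plus a multiplicative bound (Lemma~\ref{concave}). To salvage your outline you would have to abandon the pointwise spherical comparison entirely and argue along these lines: the energy integral and $M(r,\rho)$ simply cannot be compared sphere by sphere, since one averages over all rays while the other sees only the worst ray.
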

It is well known that a conformal map $f$ on the unit disk belongs to the classical $H^p$ space for all $p < 1/2$.  As a consequence of Theorem \ref{mainthm} every conformal density also belongs to $H^p$ for all $p$ in a certain range. 
\begin{theorem}\label{everydensity}
There exists a constant $p_0 = p_0(n, A, B) > 0$ so that every conformal density $\rho: \Bn \rightarrow (0, \infty)$ satisfying \textnormal{HI}$(A)$ and \textnormal{VG}$(B)$ belongs to $H^p$ for all $p < p_0$.
\end{theorem}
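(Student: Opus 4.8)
The plan is to deduce the result from the maximum--modulus characterization, item (4) of Theorem \ref{mainthm}. Taking $\psi(t)=t^{p}$, that characterization reads: $\rho\in H^{p}$ if and only if
\[
\int_{0}^{1}(1-r)^{n-2}\,M(r,\rho)^{p}\,dr<\infty .
\]
Everything then reduces to a growth estimate for the maximum modulus $M(r,\rho)$ (the $\rho$--analogue of $\sup_{|x|=r}|x|_{\rho}$). Concretely, I would aim to prove that there is an exponent $\beta=\beta(n,A,B)$ and a constant $C=C(\rho)$ with
\[
M(r,\rho)\le C\,(1-r)^{-\beta}\qquad(0<r<1).
\]
Granting this, the integrand is at most $C^{p}(1-r)^{\,n-2-\beta p}$, which is integrable on $(0,1)$ precisely when $n-2-\beta p>-1$, i.e.\ when $p<(n-1)/\beta$. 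Setting $p_{0}=(n-1)/\beta$ then gives $\rho\in H^{p}$ for all $p<p_{0}$, with $p_{0}$ depending only on $n,A,B$. As a sanity check, in the classical planar case $n=2$ the extremal (Koebe) growth is $\beta=2$, recovering the familiar threshold $p_{0}=1/2$.

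The substance of the argument is therefore the growth bound, and here I would invoke the Gehring--Hayman theorem for conformal densities from \cite{conformalmetrics}: the radial segment is a $\rho$--quasigeodesic, so that for $x=r\omega$ one has $|x|_{\rho}\approx\int_{0}^{r}\rho(t\omega)\,dt$ up to a constant depending only on $n,A,B$. Writing $r_{k}=1-2^{-k}$ and $M_{k}=M(r_{k},\rho)$, HI$(A)$ lets me replace the radial integral by a sum of Whitney contributions $\sum_{j\le k}\tau_{j}2^{-j}$, where $\tau_{j}$ is comparable to the maximum of $\rho$ on the sphere $|x|=r_{j}$. The key input from VG$(B)$ is a bound on each $\tau_{j}$: the Whitney ball $B_{z}$ centred at a point realising $\tau_{j}$ has $\mu_{\rho}$--measure comparable to $(\tau_{j}2^{-j})^{n}$ and lies within $\rho$--distance $CM_{j}$ of the origin, so $\mu_{\rho}(B_{\rho}(0,CM_{j}))\le B(CM_{j})^{n}$ forces $\tau_{j}2^{-j}\le C'M_{j}$. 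Feeding this back produces a recursion of the form $M_{k}\le C''\sum_{j\le k}M_{j}$, from which a Gr\"onwall/packing argument would yield geometric growth of $M_{k}$ in $k$, hence the desired polynomial bound.

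The main obstacle is closing this recursion with an exponent $\beta$ that depends only on $n,A,B$. The naive estimate above is self-referential (the finest scale $j=k$ contributes $M_{k}$ on both sides) and the VG constant $C'$ is in general at least $1$, so the recursion does not immediately contract. I expect the real work to be a sharper volume--packing step: rather than discarding all but one Whitney ball per sphere, one must apply VG to the full $\rho$--ball $B_{\rho}(0,CM_{k})$ together with the bounded overlap of the radial Whitney tubes, so as to absorb the self-term and gain a genuine factor strictly less than $1$, forcing geometric decay of scales. An alternative and perhaps cleaner route is to bypass $M(r,\rho)$ and prove the weak-type estimate $\sigma(\{\omega\in\Sn:|\omega|_{\rho}>\lambda\})\le C\lambda^{-p_{0}}$ directly, using Gehring--Hayman to linearise the radial distance and VG to pack the shadows of the level sets on $\Sn$; integrating against $p\lambda^{p-1}$ and invoking item (2) or (3) of Theorem \ref{mainthm} then gives the conclusion for $p<p_{0}$. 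In either approach the crux is identical: converting the $\rho$--metric volume control furnished by VG into control of the fluctuating conformal scale $\rho(x)(1-|x|)$ along radii.
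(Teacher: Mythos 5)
Your overall skeleton coincides exactly with the paper's proof: apply item (4) of Theorem \ref{mainthm} with $\psi(t)=t^{p}$, reduce the whole theorem to a polynomial growth bound $M(r,\rho)\le C(1-r)^{-\beta}$ with $\beta=\beta(n,A,B)$, and conclude $\rho\in H^{p}$ for $p<p_{0}=(n-1)/\beta$. The problem is that this growth bound is the entire substance of the theorem, and you do not prove it. Your VG-based recursion $M_{k}\le C''\sum_{j\le k}M_{j}$ is, as you yourself note, self-referential and non-contracting (the packing inequality it rests on only yields $\tau_{j}2^{-j}\le C(M_{j}+\tau_{j}2^{-j})$ with $C\ge 1$, which is vacuous), and the two repairs you suggest --- a sharper volume-packing step, or a direct weak-type estimate for $\sigma(\{\omega\in\Sn:|\omega|_{\rho}>\lambda\})$ --- are left as expectations rather than arguments. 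So there is a genuine gap at the core of the proposal: the existence of an exponent $\beta$ depending only on $n,A,B$ is never established.

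The paper closes this gap by citation: Theorem 5.1 of \cite{conformalmetrics} supplies the pointwise estimate $\rho(x)\le C_{2}\,\rho(0)(1-|x|)^{1-\beta}$ with $\beta=\beta(B,n)>1$, after which $M(r,\rho)\le C(1-r)^{-\beta}$ follows by integrating along radii (for this direction one does not even need Gehring--Hayman: the radial segment is a competitor curve, so $|x|_{\rho}\le\textnormal{length}_{\rho}([0,x])$ trivially). Moreover, your closing diagnosis --- that the crux is ``converting the VG volume control into control of $\rho(x)(1-|x|)$'' --- is misplaced: a pointwise bound sufficient for this theorem follows from HI alone by Harnack chaining. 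Joining $0$ to $x$ along the radius by a chain of comparable Whitney balls, roughly $\log_{2}\frac{1}{1-|x|}$ of them, each of which changes $\rho$ by a factor at most $A^{c}$ with $c$ absolute, gives $\rho(x)\le C\rho(0)(1-|x|)^{-c\log_{2}A}$, i.e.\ the required bound with $\beta-1=c\log_{2}A$ depending only on $A$ (the cited theorem is finer in that its exponent is independent of $A$, but that is irrelevant here). Had you replaced your VG recursion by this elementary chaining step, your first route would close immediately; without it, or the citation, the proof is incomplete.
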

We obtain as a corollary by way of the Gehring-Hayman theorem (see Section 2) the following. 
\begin{corollary}There exists a constant $p_0 = p_0(n, A, B) > 0$ so that 
\begin{eqnarray*}
\int_{\Sn}\left(\int_0^1 \rho(t\omega)dt\right)^pd\sigma< \infty
\end{eqnarray*}
whenever $0 < p < p_0$ and $\rho$ is a conformal density satisfying \textnormal{HI}$(A)$ and \textnormal{VG}$(B)$.
\end{corollary}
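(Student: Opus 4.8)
The plan is to combine the Gehring--Hayman theorem of Section 2 with the $H^p$ membership supplied by Theorem \ref{everydensity}. The crucial observation is that $\int_0^1 \rho(t\omega)\,dt$ is precisely the $\rho$-length of the radial segment running from the origin out to the boundary point $\omega\in\Sn$, and that this length is controlled from above by the induced boundary distance $|\omega|_\rho$. Once this pointwise domination is in hand, the integrability over $\Sn$ is inherited directly from the integrability of $|\omega|_\rho^p$.

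First I would fix $\omega\in\Sn$ and $0<r<1$ and record that the radial segment $t\mapsto t\omega$, $t\in[0,r]$, has $\rho$-length equal to $\int_0^r \rho(t\omega)\,dt$, since the unit radial vector makes the arc-length element $ds$ equal to $dt$. Because $|r\omega|_\rho = d_\rho(0,r\omega)$ is the infimum of $\rho$-lengths over all curves joining $0$ and $r\omega$, the Gehring--Hayman theorem applies to yield a constant $C=C(n,A,B)$, independent of both $r$ and $\omega$, such that
\begin{equation*}
\int_0^r \rho(t\omega)\,dt \leq C\,|r\omega|_\rho .
\end{equation*}
Letting $r\to 1$, the left side increases to $\int_0^1 \rho(t\omega)\,dt$ by monotone convergence, while the right side tends to $C\,|\omega|_\rho$ by the definition of the boundary distance; hence $\int_0^1 \rho(t\omega)\,dt \leq C\,|\omega|_\rho$ for every $\omega\in\Sn$.

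Next I would raise this bound to the power $p$ and integrate over the sphere to obtain
\begin{equation*}
\int_{\Sn}\left(\int_0^1 \rho(t\omega)\,dt\right)^{p}d\sigma \leq C^{p}\int_{\Sn}|\omega|_\rho^{p}\,d\sigma ,
\end{equation*}
so it remains only to see that the right-hand integral is finite for $p<p_0$. By Theorem \ref{everydensity} every conformal density satisfying \textnormal{HI}$(A)$ and \textnormal{VG}$(B)$ lies in $H^p$ for all $p<p_0=p_0(n,A,B)$, and applying the equivalence of (1) and (2) in Theorem \ref{mainthm} with the growth function $\psi(t)=t^p$ shows that $|\omega|_\rho^{p}\in L^1(\Sn)$ (the factor $\delta_1^{p}$ occurring in statement (2) being an irrelevant positive constant). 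Combining the two displays delivers the corollary with the very same $p_0$.

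The only genuinely delicate point is the invocation of the Gehring--Hayman theorem: one must be sure that its comparison constant $C$ depends only on $n$, $A$, and $B$, and not on the radius $r$ or the direction $\omega$, since uniformity is exactly what allows the constant to survive the passage $r\to 1$ and to be factored out of the integral over $\Sn$. Everything else reduces to the monotone convergence theorem and the already-established characterization of $H^p$ membership, so no further estimate on $\rho$ is needed.
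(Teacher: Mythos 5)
Your overall strategy is the one the paper intends: combine the Gehring--Hayman theorem with Theorem \ref{everydensity} and the characterization of $H^p$ membership from Theorem \ref{mainthm}. However, there is a genuine gap in your limiting step. The claim that $|r\omega|_\rho \to |\omega|_\rho$ as $r\to 1$ ``by the definition of the boundary distance'' is not justified: $|\omega|_\rho$ is defined as an infimum of $\rho$-lengths of curves in $\Bn$ ending at $\omega$, and nothing in that definition makes $r\mapsto|r\omega|_\rho$ continuous up to the boundary. The problem is acute precisely in the case you must rule out, namely a direction $\omega$ with $\int_0^1\rho(t\omega)\,dt=\infty$: there both sides of your estimate $\int_0^r\rho(t\omega)\,dt\le C|r\omega|_\rho$ may blow up as $r\to1$ while, for all your argument shows, $|\omega|_\rho$ stays finite; so you obtain neither the pointwise bound $\int_0^1\rho(t\omega)\,dt\le C|\omega|_\rho$ nor that this exceptional set is $\sigma$-null. (Even when the radial integral is finite, the convergence $|r\omega|_\rho\to|\omega|_\rho$ needs an argument --- e.g.\ concatenating a near-optimal curve to $\omega$ with the radial tail $[r\omega,\omega)$, whose $\rho$-length tends to zero --- rather than being definitional.) Showing that the radial ray is efficient \emph{all the way to the boundary} is exactly the content of the Gehring--Hayman theorem at boundary points; it cannot be recovered from its interior version by a formal limit. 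You also flag the uniformity of the Gehring--Hayman constant as the delicate point, but that is unproblematic --- the theorem gives a constant depending only on $n,A,B$; the boundary limit is where the real content sits.

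The repair is short, and removes the limit altogether. The Gehring--Hayman theorem as stated in Section 2 applies to hyperbolic geodesics with endpoints in $\overline{\Bn}$, so apply it directly to the ray $[0,\omega)$ and an arbitrary curve $\tilde\gamma\subset\Bn$ joining $0$ to $\omega$: this gives $\int_0^1\rho(t\omega)\,dt=\textnormal{length}_\rho([0,\omega))\le C\,\textnormal{length}_\rho(\tilde\gamma)$, and taking the infimum over $\tilde\gamma$ yields $\int_0^1\rho(t\omega)\,dt\le C|\omega|_\rho$ for every $\omega\in\Sn$ in one stroke (this is the same estimate the paper uses inside the proof of Lemma \ref{nontanglemma}). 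Alternatively, keep your interior estimate and observe that $r\omega\in\Gamma(\omega)$, so $\int_0^r\rho(t\omega)\,dt\le C|r\omega|_\rho\le C\rho^*(\omega)$; letting $r\to1$ by monotone convergence and invoking characterization (3) of Theorem \ref{mainthm} with $\psi(t)=t^p$ (instead of characterization (2)) finishes the proof without ever discussing boundary distances. With either correction, the remainder of your argument --- Theorem \ref{everydensity} to get $\rho\in H^p$ for $p<p_0(n,A,B)$, and integration over $\Sn$ --- goes through as written.
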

This paper is organized as follows. Section 2 covers notation, modulus of curve families and also the Gehring-Hayman Theorem. In Section 3 we prove Theorem \ref{mainthm}, and in Section 4 we prove our results that require an additional assumption on $\psi$.
\section{Preliminaries}
We set $\Bn = \{x\in \Rn : |x| < 1 \}$ and $\Sn = \{x\in \Rn : |x| = 1\}$, and in general $B(x,r)$ denotes the open ball in $\Rn$ centered at $x$ and with radius $r > 0$. For each $x\in\Bn$ let 
\begin{eqnarray*}B_x = B(x,(1-|x|)/2)
\end{eqnarray*} 
and 
\begin{eqnarray*}
S_x = \left\{\frac{x}{|x|} : x\in B_x \right\} \subseteq \Sn,
\end{eqnarray*} 
and for each $\omega \in \Sn$ let
\begin{eqnarray*}
\Gamma(\omega) = \bigcup\{B_{t\omega} : 0 \leq t < 1\}
\end{eqnarray*}
be the Stolz cone centered at $\omega$. The surface area of $\Sn$ will be denoted as $\omega_{n-1}$.

Whenever we write a constant as $C = C(A,B,...)$ we mean that the constant depends only on the values $A,B,...$. In a proof the value of a constant can change from one line to the next without any notational indication or explanation. We will write $A \approx B$ to indicate that there exists a constant $C$ such that
\begin{align*}
\frac{A}{C} \leq B \leq CA.
\end{align*}

Let $\rho$ be a conformal density on $\Bn$ and $d_\rho$ the metric on $\Bn$ induced by $\rho$. For each $x\in\Bn$ we abbreviate
\begin{eqnarray*}
|x|_\rho = d_\rho(x,0).
\end{eqnarray*}
The metric extends to the boundary in the sense that
\begin{eqnarray*}
d_\rho(\omega, x) = \inf \textnormal{length}_\rho(\gamma)
\end{eqnarray*}
is well defined for each $\omega\in\Sn$ and  $x \in \Bn$ by taking the infimum over all curves $\gamma$ in $\Bn$ with endpoints $\omega$ and $x$. We abbreviate also
\begin{eqnarray*}
|\omega|_\rho = d_\rho(\omega,0).
\end{eqnarray*}
The subscript $\rho$ will be used to denote the usual metric notions in the metric space $(\Bn, d_\rho)$. For example, given $x\in\Bn$ and $r>0$ we set $B_\rho(x,r) = \{y\in\Bn : d_\rho(x,y) < r\}$.

One of our main tools is the modulus of curve families, defined here. Let $\Gamma$ be a family of locally rectifiable curves in $\Bn$. The modulus Mod$\Gamma\in [0,\infty]$ is defined to be
\begin{eqnarray*}
\textnormal{Mod}\Gamma = \inf_\varrho \int_{\Bn} \varrho^n dx,
\end{eqnarray*}
where the infimum is taken over all Borel measurable functions $\varrho:\Bn \rightarrow [0,\infty]$ that satisfy length$_\varrho(\gamma)\geq1$ for every $\gamma \in \Gamma$. For certain families of curves, the exact value of the modulus is easy to calculate. For instance, if $E$ is a Borel set in $\Sn$ and $\Gamma$ is the collection of radial segments with one endpoint in $B(0,r), 0 < r < 1$, and the other endpoint in $E$ then
\begin{eqnarray*}
\textnormal{Mod}\Gamma = \sigma(E)(\log(1/r))^{1-n}.
\end{eqnarray*}
See \cite{vaisala} for this result and other properties of the modulus. 

We will need the following modulus estimate from \cite[Lemma 3.2]{conformalmetrics}.
\begin{lemma}\label{moduluslemma}
Let $\rho$ be a conformal density on $\Bn$ satisfying \textnormal{VG}$(B)$. Then there exists a constant $C(B, n) \geq \omega_{n-1}$ with the following property. Let $E$ be a non-empty subset of $\Bn$ and suppose $L \geq \delta > 0$. Assume that \textnormal{diam}$_\rho(E) \leq \delta$ and that $\Gamma$ is a family of curves in $\Bn$ so that $\gamma$ has one endpoint in $E$ and length$_\rho(\gamma) \geq L$ for every $\gamma \in \Gamma$. Then
\begin{eqnarray*}
\textnormal{mod}\Gamma \leq \frac{C}{[\log(1 + L/\delta)]^{n-1}}.
\end{eqnarray*}
\end{lemma}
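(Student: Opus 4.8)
The plan is to prove the estimate directly from the definition of the modulus, by exhibiting a single admissible density $\varrho$ whose $n$-energy $\int_{\Bn}\varrho^n\,dx$ already satisfies the asserted bound. Since every curve in $\Gamma$ has an endpoint in $E$ and $\textnormal{diam}_\rho(E)\le\delta$, the natural choice is a logarithmic cutoff built from the $\rho$-distance to $E$. Writing $u(y)=d_\rho(E,y)=\inf_{p\in E}d_\rho(p,y)$, I would set
\begin{equation*}
\varrho(y)=\frac{1}{\log(1+L/\delta)}\cdot\frac{\rho(y)}{\delta+u(y)}
\end{equation*}
on the region $\{y\in\Bn : u(y)\le L\}$ and $\varrho(y)=0$ elsewhere. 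The cutoff at $u\le L$ is essential: without it the energy integral diverges logarithmically, and it is precisely this truncation that produces the exponent $n-1$ rather than $n$.

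First I would check admissibility. Parametrizing a curve $\gamma\in\Gamma$ by $\rho$-arclength $s$, starting from its endpoint $p_0\in E$, the function $v(s)=u(\gamma(s))$ satisfies $v(0)=0$ and is $1$-Lipschitz in $s$, because $|u(y)-u(y')|\le d_\rho(y,y')$ while advancing $\rho$-arclength $|s-s'|$ changes $d_\rho$ by at most $|s-s'|$; hence $v(s)\le s$. For $s\in[0,L]$ we then have $v(s)\le L$, so $\gamma(s)$ lies in the support of $\varrho$, and since $\varrho$ times Euclidean arclength equals $\tfrac{1}{\log(1+L/\delta)}\tfrac{ds}{\delta+v(s)}$,
\begin{equation*}
\textnormal{length}_\varrho(\gamma)\ge\frac{1}{\log(1+L/\delta)}\int_0^L\frac{ds}{\delta+s}=1 .
\end{equation*}
I expect this to be the conceptual crux: the inequality uses only $v(s)\le s$ and therefore survives no matter how the curve wanders, whereas a naive radial test function would fail for curves that spiral or double back while accumulating $\rho$-length. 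Note also that only VG$(B)$, not HI$(A)$, enters the argument.

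It remains to bound the energy. Converting to the measure $\mu_\rho$ gives $\int_{\Bn}\varrho^n\,dx=[\log(1+L/\delta)]^{-n}\int_{\{u\le L\}}(\delta+u)^{-n}\,d\mu_\rho$. Fixing $x_0\in E$, the bounds $u(y)\le d_\rho(x_0,y)\le u(y)+\delta$ place the sublevel set $\{u<t\}$ inside $B_\rho(x_0,t+\delta)$, so VG$(B)$ yields $\mu_\rho(\{u<t\})\le B(t+\delta)^n$. Decomposing $\{u\le L\}$ into the dyadic shells $S_j=\{2^{j-1}\delta\le\delta+u<2^j\delta\}$, on each shell $(\delta+u)^{-n}\le(2^{j-1}\delta)^{-n}$ while $\mu_\rho(S_j)\le B(2^{j+1}\delta)^n$, so every shell contributes at most a fixed constant depending only on $B$ and $n$. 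The truncation $u\le L$ limits the number of nonempty shells to roughly $\log_2(1+L/\delta)$, whence the integral is at most $C(B,n)\log(1+L/\delta)$ and division by $[\log(1+L/\delta)]^n$ gives $\textnormal{mod}\,\Gamma\le C(B,n)[\log(1+L/\delta)]^{-(n-1)}$. Enlarging the constant if necessary secures $C\ge\omn$, consistent with the sharp spherical-ring modulus $\omn(\log(L/\delta))^{1-n}$ that this construction imitates.
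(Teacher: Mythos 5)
Your proof is correct. Note that the paper itself does not prove this lemma at all; it is imported verbatim from the reference \cite[Lemma 3.2]{conformalmetrics}, so there is no in-paper argument to compare against. Your construction --- the truncated logarithmic test density $\varrho(y)=[\log(1+L/\delta)]^{-1}\rho(y)/(\delta+d_\rho(E,y))$ on $\{d_\rho(E,\cdot)\le L\}$, admissibility via the $1$-Lipschitz property of $d_\rho(E,\cdot)$ along $\rho$-arclength, and the energy bound via VG$(B)$ applied to dyadic $d_\rho$-shells centered at a point of $E$ --- is essentially the standard proof of this estimate, i.e.\ the one in the cited source; the hypothesis $L\ge\delta$ enters exactly where you use it implicitly, to absorb the additive constant in the shell count ($\le 1+\log_2(1+L/\delta)$) into $C(B,n)\log(1+L/\delta)$, since $\log(1+L/\delta)\ge\log 2$.
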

Now using simple modulus techniques we obtain the following. 
\begin{lemma}\label{moduluslemma2}
Let $\rho$ be a conformal density. There exists a constant $C = C(n, A, B)$ such that 
\begin{eqnarray*}
\sigma(\{\omega\in S_x : d_\rho(w,x) > M\rho(x)(1-|x|)\}) \leq C\sigma(S_x)(\log M)^{1-n}
\end{eqnarray*}
for any $x\in \Bn$ and $M > 1$. 
\end{lemma}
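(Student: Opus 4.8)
The plan is to sandwich the modulus of a single curve family between the decay estimate of Lemma \ref{moduluslemma} (which supplies the factor $(\log M)^{1-n}$) and the exactly computable modulus of a radial family (which supplies the factor $\sigma(G)$). Write $G = \{\omega \in S_x : d_\rho(\omega,x) > M\rho(x)(1-|x|)\}$ for the set whose measure we must bound and abbreviate $\delta_x = \rho(x)(1-|x|)$. Since $(\log M)^{1-n} \to \infty$ as $M \to 1^+$ while $\sigma(G) \le \sigma(S_x)$ always holds, the inequality has no content for $M$ near $1$; so I would fix a threshold $M_0 = M_0(n,A)$, prove the estimate for $M \ge M_0$, and absorb the range $1 < M < M_0$ into the constant $C$.

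For $M \ge M_0$ I would take $E$ to be the radial projection of $S_x$ onto the sphere of radius $|x|$, namely $\{|x|\omega : \omega \in S_x\}$, and set $\Gamma = \{\gamma_\omega : \omega \in G\}$ with $\gamma_\omega = \{t\omega : |x| \le t < 1\}$. Two facts drive the upper bound. First, HI$(A)$ forces $\rho \approx \rho(x)$ across the boundedly many Whitney balls needed to cover the region occupied by $E$ and $x$, so $\operatorname{diam}_\rho(E) \le C_1\delta_x$ and $d_\rho(x,|x|\omega) \le C_1\delta_x$. Second, each $\gamma_\omega$ has one endpoint $|x|\omega \in E$, and by the triangle inequality its $\rho$-length is at least $d_\rho(\omega,|x|\omega) \ge d_\rho(\omega,x) - C_1\delta_x > (M - C_1)\delta_x$, which for $M \ge M_0 = 2C_1$ exceeds $L := \tfrac12 M\delta_x \ge \operatorname{diam}_\rho(E)$. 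Lemma \ref{moduluslemma}, applied with this $E$, with $\delta = C_1\delta_x$ and with $L$, then gives $\operatorname{mod}\Gamma \le C[\log(1 + L/\delta)]^{1-n} \le C'(\log M)^{1-n}$.

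The lower bound is where the geometry enters. Since $\Gamma$ is precisely the family of radial segments joining the sphere $\{|y| = |x|\}$ to $G$, the modulus computation quoted in Section 2 yields $\operatorname{mod}\Gamma \ge \sigma(G)(\log(1/|x|))^{1-n}$. Combining the two bounds gives $\sigma(G) \le C'(\log M)^{1-n}(\log(1/|x|))^{n-1}$, and it remains only to absorb the radial factor into $\sigma(S_x)$. Here I expect the main obstacle: the clean comparisons $\log(1/|x|) \approx 1 - |x|$ and $\sigma(S_x) \approx (1-|x|)^{n-1}$ are valid only when $|x|$ is bounded away from $0$, so the desired bound $(\log(1/|x|))^{n-1} \le C\sigma(S_x)$ degenerates in the interior. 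I would therefore split on $|x| \ge 1/2$ and $|x| < 1/2$.

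In the regime $|x| \ge 1/2$ the two comparisons hold with purely dimensional constants (using $\log(1/|x|) \le 2(1-|x|)$ and the fact that $S_x$ is a spherical cap of angular radius $\approx 1-|x|$), and the argument closes at once. In the regime $|x| < 1/2$ the cap $S_x$ has $\sigma(S_x) \ge c_n > 0$, so it suffices to prove the weaker estimate $\sigma(G) \le C'(\log M)^{1-n}$; this follows from the same two-sided scheme but with $E = B_x$ (whose $\rho$-diameter is at most $A\delta_x$ directly from HI$(A)$, with no chaining) and with the radial comparison family started at the fixed radius $1/2$ — legitimate because every ray toward a point of $G$ meets $B_x$ at a radius below $1/2$, so the radial-from-$1/2$ segments are subcurves of admissible curves of $\Gamma$. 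The only other point needing care is the HI-chaining estimate for $\operatorname{diam}_\rho(E)$ in the first regime, which is routine but must be recorded since $E$ is not contained in a single Whitney ball.
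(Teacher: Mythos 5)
Your overall scheme --- absorbing $1<M<M_0$ into the constant, splitting on $|x|$, getting a lower bound on a radial family's modulus from the exact computation and an upper bound from Lemma \ref{moduluslemma} --- is exactly the paper's (the paper splits at $|x|=1/4$ rather than $1/2$), and your outer regime $|x|\ge 1/2$ is handled correctly; indeed your explicit chaining estimate for $\textnormal{diam}_\rho(E)$ is recorded more carefully than in the paper's proof. The gap is in the inner regime $|x|<1/2$.

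There your argument does not close, because modulus monotonicity under passing to subcurves runs in the wrong direction for what you need. Let $\Gamma$ be the family of radial curves with one endpoint in $E=B_x$ and terminal point in $G$, and let $\Gamma_{1/2}$ be the truncated family $\{t\omega : 1/2\le t<1\}$, $\omega\in G$. Lemma \ref{moduluslemma} gives $\textnormal{Mod}\,\Gamma \le C(\log M)^{1-n}$, and the exact computation gives $\textnormal{Mod}\,\Gamma_{1/2}=\sigma(G)(\log 2)^{1-n}$. But the containment you invoke (each curve of $\Gamma$ contains a subcurve in $\Gamma_{1/2}$) yields $\textnormal{Mod}\,\Gamma \le \textnormal{Mod}\,\Gamma_{1/2}$, since any density admissible for $\Gamma_{1/2}$ is automatically admissible for the family of longer curves $\Gamma$; so both of your estimates are upper bounds on $\textnormal{Mod}\,\Gamma$, and nothing bounds $\sigma(G)$. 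This is not a formal quibble: when $|x|<1/3$ the origin lies in $B_x$, the curves of $\Gamma$ are then entire radii $[0,\omega)$, and that family has modulus \emph{zero}, so no lower bound of the form $\textnormal{Mod}\,\Gamma \ge c\,\sigma(G)$ can hold for it. The repair is to apply Lemma \ref{moduluslemma} directly to $\Gamma_{1/2}$: its curves still have $\rho$-length at least $(M-C)\rho(x)(1-|x|)$, and their inner endpoints $\{(1/2)\omega : \omega\in G\}$ lie within $\rho$-distance $C(A,n)\rho(x)(1-|x|)$ of $x$ --- but establishing this requires precisely the Whitney-ball chaining you wanted to avoid (``with no chaining''), because those endpoints need not lie in $B_x$: a ray nearly tangent to $B_x$ meets it only at radii strictly below $|x|$, hence below $1/2$. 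With that modification the inner case closes and becomes, up to the choice of truncation radius, the paper's argument.
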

\begin{proof}
Let $x\in\Bn$ and $E = \{\omega\in S_x : d_\rho(w,x) > M\rho(x)(1-|x|)\}$. Suppose first that $|x| < 1/4$. If $\Gamma_E$ is the collection of radial segments with one endpoint in $E$ and the other in $B_x  \cap S(0,1/4)$ then Mod$(\Gamma_E) = \sigma(E)(\log 4)^{1-n}$. By property HI(A) and the definition of the set $E$ there is a constant $C = C(n,A)$ such that each curve in $\Gamma_E$ has one endpoint in $B_\rho(x, C\rho(x)(1-|x|))$ and the other in $\overline{\Bn}\setminus B_\rho(x, M\rho(x)(1-|x|))$. If $2 \leq C$ and $C^2 < M$ then
\begin{eqnarray*}
\sigma(E)(\log 4)^{1-n} = \textnormal{Mod}(\Gamma_E) \leq C(\log M)^{1-n} \leq C\omn(\log M)^{1-n}
\end{eqnarray*}
by Lemma \ref{moduluslemma}. If $1 < M \leq C^2$ then trivially
\begin{eqnarray*}
\sigma(E) \leq \omn(\log C^2)^{n-1}(\log M)^{1-n}.
\end{eqnarray*}

If $1/4 \leq |x|$ and $\Gamma_E$ is the collection of radial segments with one endpoint in $E$ and the other endpoint in $B_x \cap S(0, |x|)$ then Mod$(\Gamma_E) = \sigma(E)(\log \frac{1}{|x|})^{1-n}$. Like before, Lemma \ref{moduluslemma} implies that
\begin{eqnarray*}
\sigma(E)(\log 1/|x|)^{1-n} = \textnormal{Mod}(\Gamma_E) \leq C(\log M)^{1-n} 
\end{eqnarray*}
whenever $2 \leq C$ and $C^2 < M$. The other case is again trivial, so noting that $(\log 1/|x|)^{n-1} \approx \sigma(S_x)$ we are done. 
\end{proof}

The following version of the Gehring-Hayman theorem is a generalization of a result originally proved by Gehring and Hayman in \cite{GH}. This version was proved in \cite{conformalmetrics} using the modulus of curve families as a primary tool. Recall that for all $x\in\Bn$ the hyperbolic geodesic from connecting $0$ and $x$ is the radial segment $[0,x]$.  
\begin{ghthm}
Let $\rho$ be a conformal density on $\Bn$. There is a constant $C(A, B, n)$ with the following property. If $\gamma$ is a hyperbolic geodesic in $\Bn$ with endpoints in $\overline{\Bn}$
 and $\tilde{\gamma}$ is any other curve in $\Bn$ with the same endpoints, then
 \begin{eqnarray*}
 \textnormal{length}_\rho(\gamma) \leq C \textnormal{length}_\rho(\tilde{\gamma}).
 \end{eqnarray*}
 \end{ghthm}

\section{Proof of characterization theorem}
For each conformal density $\rho$ define the non-tangential maximal function $\rho^*$ on $\Sn$ as
\begin{align*}
\rho^*(\omega) = \sup_{x\in \Gamma(\omega)}|x|_\rho.
\end{align*}

\begin{lemma}\label{nontanglemma}
Let $\psi$ be a growth function, $\rho$ a conformal density and $\delta > 0$. There exists a constant $C = C(A,B,n)$ such that
\begin{align*}
\int_{\Sn} \psi(\frac{\delta}{C}\rho^*(\omega))d\sigma \leq \int_{\Sn}\psi(\delta|\omega|_\rho) d\sigma. 
\end{align*}
\end{lemma}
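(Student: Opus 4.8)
The plan is to pass to distribution functions and reduce the claimed integral inequality to a single super-level-set comparison. Since $\psi$ is a growth function it is differentiable with $\psi'\ge 0$ and $\psi(0)=0$, so by the layer-cake formula, for any nonnegative Borel $g$ on $\Sn$,
\begin{align*}
\int_{\Sn}\psi(\delta g)\,d\sigma=\int_0^\infty \psi'(\delta t)\,\sigma(\{g>t\})\,\delta\,dt.
\end{align*}
Applying this to $g=\rho^*/C$ on the left and to $g(\omega)=|\omega|_\rho$ on the right, it suffices to produce a constant $C=C(A,B,n)$ with
\begin{align*}
\sigma(\{\omega\in\Sn:\rho^*(\omega)>Ct\})\le \sigma(\{\omega\in\Sn:|\omega|_\rho>t\})\qquad(t>0);
\end{align*}
integrating this against $\psi'(\delta t)\,\delta\,dt$ then yields the lemma, with the constant $C$ sitting inside $\psi$ on the left and no extra factor on the right.

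The geometric heart of the argument is the pointwise bound that for some $C=C(A,B,n)$ one has $|x|_\rho\le C\,|\omega'|_\rho$ for every $x\in\Bn$ and every $\omega'\in S_x$. To prove it, write $\zeta=x/|x|$, so that $[0,x]$ is a radial segment and
\begin{align*}
|x|_\rho\le\textnormal{length}_\rho([0,x])=\int_0^{|x|}\rho(s\zeta)\,ds.
\end{align*}
Since $\omega'$ and $\zeta$ both lie in $S_x$ they are separated by angle at most $C(1-|x|)$, so for each $s\le|x|$ the points $s\zeta$ and $s\omega'$ can be joined at radius $s$ through boundedly many balls $B_z$, on each of which HI$(A)$ controls $\rho$; hence $\rho(s\zeta)\approx\rho(s\omega')$ with a constant depending only on $A,n$. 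Therefore $\int_0^{|x|}\rho(s\zeta)\,ds\le C\int_0^{|x|}\rho(s\omega')\,ds\le C\int_0^{1}\rho(s\omega')\,ds=C\,\textnormal{length}_\rho([0,\omega'])$, and the Gehring-Hayman Theorem bounds this last quantity by $C|\omega'|_\rho$. Combining the displays gives the pointwise bound, and in particular $|\omega'|_\rho\ge |x|_\rho/C$ for \emph{every} $\omega'\in S_x$.

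With the pointwise bound in hand the super-level comparison follows by a selection argument. If $\rho^*(\omega)>Ct$ then there is $x\in\Gamma(\omega)$ with $|x|_\rho>Ct$, so the pointwise bound forces $S_x\subseteq\{|\cdot|_\rho>t\}$, while the geometry of the Stolz cone places $\omega$ in a fixed dilate of $S_x$. Thus $\{\rho^*>Ct\}$ is covered by caps, each contained in $\{|\cdot|_\rho>t\}$, and it remains to extract from them a disjoint subfamily whose union still contains $\{\rho^*>Ct\}$ and sum the measures.

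The main obstacle is to carry out this last selection \emph{without} losing a multiplicative constant in the measure: a plain Vitali selection covers $\{\rho^*>Ct\}$ only by dilated caps and costs a dimensional factor, and such a factor cannot be absorbed into the argument of a general growth function $\psi$. I would avoid this by replacing each $S_x$ with a comparable dyadic (Whitney) cap on $\Sn$ and keeping only the maximal ones: these are pairwise disjoint, their union still contains $\{\rho^*>Ct\}$, and, after enlarging $C$ so that the pointwise bound persists on the slightly larger dyadic caps, their union lies in $\{|\cdot|_\rho>t\}$, giving the comparison with constant $1$ on the right. As an alternative to the Gehring-Hayman plus HI step one could instead invoke the modulus estimate of Lemma \ref{moduluslemma2} to bound the set of $\omega'\in S_x$ with $d_\rho(\omega',x)$ large, but that only controls most of $S_x$, whereas the argument above controls every $\omega'\in S_x$ and so keeps the selection clean.
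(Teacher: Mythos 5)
Your proof is correct, and its geometric core is the same as the paper's --- but you have wrapped that core in machinery that does nothing. Your key estimate, $|x|_\rho\le C\,|\omega'|_\rho$ for every $\omega'$ in (a fixed dilate of) $S_x$, proved by comparing $\rho$ along the rays through $x/|x|$ and $\omega'$ via Harnack chains and then invoking the Gehring--Hayman theorem, is precisely the paper's estimate; the paper derives it by writing $|x|_\rho\le|t\omega|_\rho+d_\rho(x,t\omega)$ for $x\in B_{t\omega}$ and bounding both terms using HI$(A)$ and Gehring--Hayman. The difference is what happens next. Since $x\in\Gamma(\omega)$ places $\omega$ in a fixed dilate of $S_x$, your estimate applied with $\omega'=\omega$ already gives $|x|_\rho\le C|\omega|_\rho$ for every $x\in\Gamma(\omega)$, i.e.\ the pointwise inequality $\rho^*(\omega)\le C|\omega|_\rho$ on all of $\Sn$; monotonicity of $\psi$ then yields $\psi(\tfrac{\delta}{C}\rho^*(\omega))\le\psi(\delta|\omega|_\rho)$ pointwise, and integrating finishes the lemma. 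That is the paper's entire proof. Your detour through the layer-cake formula, the super-level-set comparison, and the cap covering is therefore unnecessary, and the ``main obstacle'' your last paragraph works around is a phantom: disjointness of the selected caps is never needed, because once each (enlarged) cap lies inside $\{|\cdot|_\rho>t\}$, their union does too, and monotonicity of $\sigma$ alone gives $\sigma(\{\rho^*>Ct\})\le\sigma(\{|\cdot|_\rho>t\})$ --- no Vitali loss, no dyadic selection. Recognizing this collapses your whole argument to the pointwise one.
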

\begin{proof}
Let $\omega \in \Sn$ and $x\in \Gamma(\omega)$. Then $x\in B_{t\omega}$ for some $0 < t < 1$. The Gehring-Hayman theorem and HI($A$) imply that
\begin{align*}
|x|_\rho \leq |t\omega|_\rho + d_\rho(x, t\omega) \leq C\text{length}_\rho([0, \omega)) \leq C|\omega|_\rho,
\end{align*}
from which the result easily follows.
\end{proof}
A measure $\mu$ on $\Bn$ is called a \textit{Carleson measure} if there exists a constant $C(\mu)>0$ such that
\begin{align*}
\mu(\Bn \cap B(\omega, r)) \leq C(\mu) r^{n-1}
\end{align*}
for all $\omega \in \Sn$ and all $r > 0$. We denote the infimum of all such constants $C(\mu)$ by $\alpha_\mu$. 

\begin{lemma}\label{ghapp}
Let $\rho$ be a conformal density, $\psi$ a growth function, $\delta > 0$ and $\mu$ a Carleson measure on $\Bn$. There are constants $C_1 = C_1(A,B,n)$ and $C_2 = C_2(n, \alpha_\mu)$ such that
\begin{align*}
\int_{\Bn} \psi(\delta/C_1 |x|_\rho)d\mu \leq C_2\int_{\Sn}\psi(\delta |\omega|_\rho)d\sigma.
\end{align*}
\end{lemma}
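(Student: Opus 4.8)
The plan is to reduce the inequality to a weak-type (distribution function) estimate for the Carleson measure $\mu$ and then integrate back up. By the layer-cake formula,
\begin{align*}
\int_{\Bn}\psi\big(\tfrac{\delta}{C_1}|x|_\rho\big)\,d\mu = \int_0^\infty \mu\big(\{x\in\Bn : \psi(\tfrac{\delta}{C_1}|x|_\rho)>\lambda\}\big)\,d\lambda ,
\end{align*}
and since $\psi$ is strictly increasing the super-level set is exactly $\{x : |x|_\rho > t(\lambda)\}$ with $t(\lambda)=\tfrac{C_1}{\delta}\psi^{-1}(\lambda)$; the identical substitution on the sphere identifies $\int_0^\infty\sigma(\{\rho^*>t(\lambda)\})\,d\lambda$ with $\int_{\Sn}\psi(\tfrac{\delta}{C_1}\rho^*)\,d\sigma$. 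Consequently the whole statement follows once I prove the single estimate
\begin{align*}
\mu(\{x\in\Bn : |x|_\rho > t\}) \le C(n)\,\alpha_\mu\,\sigma(\{\omega\in\Sn : \rho^*(\omega)>t\}),\qquad t>0,
\end{align*}
and then apply Lemma \ref{nontanglemma} with $C_1$ chosen to be its constant $C(A,B,n)$, which converts $\int_{\Sn}\psi(\tfrac{\delta}{C_1}\rho^*)\,d\sigma$ into $\int_{\Sn}\psi(\delta|\omega|_\rho)\,d\sigma$ and delivers $C_2=C(n)\alpha_\mu$. A naive pointwise bound $\psi(\tfrac{\delta}{C_1}|x|_\rho)\le\sigma(S_x)^{-1}\int_{S_x}\psi(\tfrac{\delta}{C_1}\rho^*)\,d\sigma$ followed by Fubini does \emph{not} suffice, since it produces the weight $\int_{\Gamma(\omega)}(1-|x|)^{1-n}\,d\mu$, which diverges for a general Carleson measure; passing through super-level sets is precisely what supplies the missing scale cutoff.

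For the weak-type estimate write $O_t=\{x\in\Bn:|x|_\rho>t\}$ and $E_t=\{\omega\in\Sn:\rho^*(\omega)>t\}$. The key geometric observation is that $O_t$ lies in the tent over $E_t$: if $x\in O_t$ and $\omega\in S_x$, then $x\in\Gamma(\omega)$ after fixing the cone aperture by a dimensional constant, so the definition of $\rho^*$ forces $\rho^*(\omega)\ge|x|_\rho>t$, that is $\omega\in E_t$. Hence the entire shadow satisfies $S_x\subseteq E_t$, and as $S_x$ is a cap about $x/|x|$ of radius comparable to $1-|x|$ this yields $\mathrm{dist}(x/|x|,\Sn\setminus E_t)\gtrsim 1-|x|$ for every $x\in O_t$.

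Since $\rho^*$ is lower semicontinuous the set $E_t$ is open, so it admits a Whitney decomposition into boundary caps $\{Q_j\}$ with radii $r_j\approx\mathrm{dist}(Q_j,\Sn\setminus E_t)$, bounded overlap, and $\sum_j\sigma(Q_j)\le C(n)\sigma(E_t)$. By the previous paragraph each $x\in O_t$ has $x/|x|\in Q_j$ for some $j$ and $1-|x|\lesssim r_j$, hence $x$ lies in the Carleson box $T(Q_j)=\{y:\,y/|y|\in Q_j,\ 1-|y|\le Cr_j\}\subseteq\Bn\cap B(\omega_j,C'r_j)$, where $\omega_j$ is the center of $Q_j$. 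The Carleson condition then gives $\mu(T(Q_j))\le\alpha_\mu(C'r_j)^{n-1}\le C(n)\alpha_\mu\sigma(Q_j)$, and summing over $j$ produces $\mu(O_t)\le C(n)\alpha_\mu\sigma(E_t)$, as required.

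The two layer-cake identities and the final summation are routine. The main obstacle is the geometric core of the middle step, namely matching the shadow $S_x$ against the cone $\Gamma(\omega)$ that defines $\rho^*$ with compatible apertures, together with the Whitney bookkeeping that upgrades the pointwise tent containment to the summed Carleson bound. Property HI($A$) enters only indirectly through Lemma \ref{nontanglemma}; the substance here is the Carleson covering argument itself, and the only place where the constants genuinely interact with $\rho$ is the closing application of that lemma.
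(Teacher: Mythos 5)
Your proof is correct and follows essentially the same route as the paper's: both reduce to the weak-type estimate $\mu(\{|x|_\rho>t\})\lesssim \alpha_\mu\,\sigma(\{\rho^*>t\})$ via a Whitney decomposition of the open super-level set of $\rho^*$ into boundary caps, use the Carleson condition on the resulting boxes, integrate back with the layer-cake formula, and finish with Lemma \ref{nontanglemma}. The only differences are cosmetic (the paper writes the layer-cake integral with $\psi'(\lambda)$ and chooses the scaling $\epsilon$ at the end, while you spell out the shadow-cone containment that the paper compresses into ``by the properties of the Whitney decomposition'').
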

\begin{proof}
Let $\epsilon > 0$ and set $E(\lambda) = \{x\in \Bn : \epsilon|x|_\rho > \lambda\}$ and $U(\lambda) = \{\omega \in \Sn : \epsilon\rho^*(\omega) > \lambda \}$ for each $\lambda > 0$. We can use the generalized form of the Whitney decomposition \cite[Theorem III.1.3]{whitneyref} to write the open set $U(\lambda)$ as
\begin{align*}
U(\lambda) = \bigcup_{k=1}^\infty  S_{x_k},
\end{align*}
where the points $x_k\in\Bn$ are chosen so that each $\omega \in U(\lambda)$ belongs to no more than $N(n)$ caps  $S_{x_k}$ and also so that $(1 - |x_k|)/C \leq d(S_{x_k}, \partial U(\lambda)) \leq C(1 - |x_k|)$. The constant is absolute and the distance is the spherical distance on $\Sn$. It follows by the properties of the Whitney decomposition that $E(\lambda) \subset \bigcup_{k=1}^\infty B(x_k/|x_k|, C(1 - |x_k|))$, for some absolute constant $C$. Then
\begin{eqnarray*}
\mu(E(\lambda)) &\leq& \sum_{k=1}^\infty \mu(B(x_k/|x_k|, C(1 - |x_k|))\cap\Bn)\\ &\leq& C(n, \alpha_\mu) \sum_{k=1}^\infty (1-|x_k|)^{n-1} \\ &\leq& C(n, \alpha_\mu) \sum_{k=1}^\infty \sigma(S_{x_k}) \leq C(n, \alpha_\mu)\sigma(U(\lambda)).
\end{eqnarray*}
Then,
\begin{eqnarray*}
\int_{\Bn} \psi (\epsilon|(x)|_\rho) d\mu&=& \int_0^\infty \psi'(\lambda) \mu(E(\lambda))d\lambda \\ &\leq& C(n, \alpha_\mu) \int_0^\infty  \psi'(\lambda) \sigma(U(\lambda)) d\lambda \\&=& C(n, \alpha_\mu)\int_{\Sn} \psi(\epsilon \rho^*(\omega))d\sigma.
\end{eqnarray*}
By applying Lemma \ref{nontanglemma} with an appropriate choice of $\epsilon$ we are done.
\end{proof}
With each conformal density $\rho$ we associate the maximum modulus function 
\begin{align*}
M(r, \rho) = \sup_{|x|\leq r} |x|_\rho.
\end{align*}
defined for $r \in [0, 1)$. We define the function over the closed ball rather than the sphere of radius $r$ so that the function is strictly increasing. By the Gehring-Hayman theorem there is a constant $C(A,B,n)$ such that
\begin{align}\label{maxmodequiv}
\sup_{|x| = r}|x|_\rho \leq M(r,\rho) \leq C\sup_{|x| = r}|x|_\rho.
\end{align}
\begin{proof}[Proof of Theorem \ref{mainthm}]
Lemma \ref{nontanglemma} shows that (2) implies (3). By definition (3) implies both (2) and (1). If $C$ is the constant from the Gehring-Hayman theorem then Fatou's lemma and the Gehring-Hayman theorem imply that 
\begin{eqnarray*}
\int_{\Sn} \psi(\delta/C |\omega|_\rho) d\sigma &\leq& \int_{\Sn} \psi(\delta/C \textnormal{length}_\rho([0,\omega))) d\sigma \\ &=& \int_{\Sn} \liminf_{r\rightarrow 1}\psi(\delta/C \textnormal{length}_\rho([0,r\omega])) d\sigma \\ &\leq& \liminf_{r\rightarrow 1}\int_{\Sn} \psi(\delta/C \textnormal{length}_\rho([0,r\omega])) d\sigma \\ &\leq& \sup_{0 < r < 1} \int_{\Sn} \psi(\delta |r\omega|_\rho) d\sigma,
\end{eqnarray*}
which shows that (1) implies (2). Thus (1), (2), and (3) are equivalent, and we now proceed to show the equivalence of (2) and (4). 

First assume (4). We will show that there is a constant $C > 0$ such that 
\begin{eqnarray*}
\int_{\Sn} \psi\left(\frac{\delta}{3}|\omega|_\rho\right) d\sigma \leq C\int_0^1 (1-t)^{n-2} \psi(\delta M(r,\rho)) dr.
\end{eqnarray*}
We start by rewriting the integral on the left as
\begin{eqnarray}\label{cavform1}
\int_{\Sn} \psi\left(\frac{\delta}{3}|\omega|_\rho\right) d\sigma = \int_0^\infty \psi'(\lambda)\sigma(\{\omega \in \Sn: \frac{\delta}{3}|\omega|_\rho > \lambda \})d\lambda.
\end{eqnarray}
Let $E = \{\omega \in \Sn: \frac{\delta}{3}|\omega|_\rho > \lambda \}$ for a fixed $\lambda$. We will obtain an upper bound on $\sigma(E)$ using modulus of curve families. Indeed, there exists a unique $r = r(\lambda)$ such that 
\begin{eqnarray*}
r_\lambda = \sup \{r\in [0, 1) : \delta M(r, \rho) = \lambda \}.
\end{eqnarray*}
Denote by $\Gamma_E$ the path family consisting of the radial segments connecting $B(0,r_\lambda)$ to $E$. Then,
\begin{eqnarray*}
\textnormal{Mod}(\Gamma_E) = \frac{\sigma(E)}{(\log(1/r_\lambda))^{n-1}} \geq \frac{\sigma(E)}{2(1-r_\lambda)^{n-1}}
\end{eqnarray*}
when $1/2 < r_\lambda < 1$. Since each curve in $\Gamma_E$ has one endpoint belonging to $B_\rho(0, \lambda/\delta)$ and one endpoint in $\overline{\Bn}\setminus B_\rho(0, 3\lambda/\delta)$, we can apply Lemma \ref{moduluslemma} to obtain a constant $C = C(B,n)$ such that 
\begin{eqnarray*}
\sigma(E) \leq C(1-r_\lambda)^{n-1}
\end{eqnarray*}
whenever $1/2 < r_\lambda < 1$ and $r_\lambda$ is defined as above. If $\nu$ is the measure on $[0,1]$ defined by $d\nu = (1-t)^{n-2}dt$ then
\begin{eqnarray*}
\nu(\{t\in [0,1]: M(t,\rho) > \lambda/\delta\}) = (1-r_\lambda)^{n-1}/(n-1).
\end{eqnarray*}
This estimate and Fubini's theorem applied to the right hand side of (\ref{cavform1}) give
\begin{gather*}
\int_{\Sn} \psi\left(\frac{\delta}{3}|\omega|_\rho\right) d\sigma \\\leq \omega_{n-1}\psi(\delta M(1/2,\rho)) + C \int_{\delta M(1/2,\rho)}^\infty \psi'(\lambda)\nu(\{t\in [0,1]: M(t,\rho) > \lambda/\delta\})d\lambda   \\\leq \omega_{n-1}\psi(\delta M(1/2,\rho)) + C \int_0^\infty \psi'(\lambda)\int_{\{t\in [0,1]: M(t,\rho) > \lambda/\delta\}} (1-t)^{n-2}dt d\lambda \\ \leq C\int_0^1 (1-t)^{n-2}\psi(\delta M(t,\rho))dt,
\end{gather*}
which is what we needed to show. 

Conversely, assume (2) holds for some $\delta > 0$, and choose points $x_k\in \Bn$ such that ${|x_k| = r_k = 1 - 2^{-k}}$ and $|x_k|_\rho = \sup_{|x| = r_k}|x|_\rho, k=1,2,\mathellipsis$. Given any $\epsilon > 0$ we have
\begin{eqnarray*}
\int_0^1 (1-r)^{n-2}\psi(\epsilon M(r,\rho))dr &\leq& 2^n\sum_{k=1}^\infty (2^{-k})^{n-1}\psi(\epsilon M(r_k, \rho)) \\ &\leq& 2^n\sum_{k=1}^\infty (2^{-k})^{n-1}\psi(C \epsilon |x_k|_\rho) \\ &=& 2^n\int_{\Bn} \psi(C\epsilon |x|_\rho)d\mu,
\end{eqnarray*} 
where $d\mu(x) = \sum_{k=1}^\infty (1-|x|)^{n-1}\delta_{x_k}$ and $C$ is the constant from (\ref{maxmodequiv}). The measure $\mu$ is a Carleson measure, and so by Lemma \ref{ghapp} there are universal constants $C_1$ and $C_2$ such that
\begin{eqnarray*}
\int_{\Bn} \psi(C\epsilon |x|_\rho)d\mu \leq C_1\int_{\Sn}\psi(C_2\epsilon |\omega|_\rho)d\sigma.
\end{eqnarray*}
The proof is finished by letting $\epsilon = \delta/C_2$.
\end{proof}
\section{Characterizations under additional conditions on $\psi$}
A growth function is \textit{doubling} if there exists a constant $C$ such that $\psi(2t) \leq C\psi(t)$ for all $t\in[0,\infty].$ The infimum of all such constants is called the \textit{doubling constant} of $\psi$ and is denoted by $C_\psi$. 

\begin{lemma}\label{doubling}
Let $\psi$ be a doubling growth function and $\rho$ a conformal density. If 
\begin{eqnarray*}
\int_{\Bn} \psi(\rho(x)(1-|x|))\frac{dx}{1-|x|} < \infty
\end{eqnarray*}
then $\psi(|\omega|_\rho) \in L^1(\Sn)$.
\end{lemma}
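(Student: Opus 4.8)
The plan is to establish the conclusion in the normalized form $\int_{\Sn}\psi(c\,|\omega|_\rho)\,d\sigma<\infty$ for one convenient small constant $c>0$; since $\psi$ is doubling we may freely move multiplicative constants across $\psi$, because $\psi(Ct)\le C_\psi^{\lceil\log_2 C\rceil}\psi(t)$, and in particular a bound for small $c$ restores the claim for $c=1$. The first move is essentially free: from the definition of $d_\rho$ as an infimum over curves, the radial segment is one competitor, so $|\omega|_\rho\le\int_0^1\rho(t\omega)\,dt$, and by monotonicity of $\psi$ it suffices to bound $\int_{\Sn}\psi\bigl(c\int_0^1\rho(t\omega)\,dt\bigr)\,d\sigma$. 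Next I would rewrite the hypothesis in polar coordinates, $x=r\omega$: discarding the factor $r^{n-1}$ (comparable to $1$ for $r\ge 1/2$, with the part $r<1/2$ contributing only a bounded amount since $\rho$ is continuous), the energy integral becomes $\int_{\Sn}\int_0^1\psi(\rho(r\omega)(1-r))\frac{dr}{1-r}\,d\sigma$.

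I would then discretize both integrals along the dyadic intervals $I_k=[1-2^{-k},1-2^{-k-1}]$. On the radial portion of $I_k$ condition \textnormal{HI}$(A)$ makes $\rho$ comparable to a single value, so setting $a_k(\omega):=\rho(r_k\omega)2^{-k}\approx\rho(r\omega)(1-r)$ on $I_k$ (with $r_k=1-2^{-k}$), and using $\int_{I_k}\frac{dr}{1-r}=\log 2$, one gets $\int_0^1\rho(t\omega)\,dt\approx\sum_k a_k(\omega)$ while the energy $\approx\int_{\Sn}\sum_k\psi(a_k(\omega))\,d\sigma$. The lemma is thereby reduced to the integrated comparison
\[
\int_{\Sn}\psi\Bigl(\textstyle\sum_k a_k(\omega)\Bigr)\,d\sigma\ \lesssim\ \int_{\Sn}\sum_k\psi(a_k(\omega))\,d\sigma\ +\ \textnormal{(finite)}.
\]

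The main obstacle is that this fails as a pointwise inequality: for general nonnegative numbers one cannot dominate $\psi(\sum_k a_k)$ by $C\sum_k\psi(a_k)$ (take $a_k$ roughly constant over a long range of scales and $\psi(t)=t^2$). The whole content of the lemma is that such flat radial profiles are incompatible with \textnormal{VG}$(B)$, and this is made quantitative by Lemma \ref{moduluslemma2}: taking $x=r_k\omega_0$, so that $\rho(x)(1-|x|)\approx a_k$, and using the Gehring--Hayman theorem to compare $d_\rho(\omega,x)$ with the radial tail $\sum_{j\ge k}a_j(\omega)$, the lemma shows that on each cap $S_x$ the tail exceeds $M a_k$ only on a subset of relative measure $\lesssim(\log M)^{1-n}$. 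I would use this at every dyadic scale to argue that, off a controlled exceptional set, the radial sum is dominated by its leading dyadic block, so that the doubling of $\psi$ turns $\psi$ of the sum into a constant multiple of the individual $\psi(a_k)$. Concretely I would run a Cavalieri argument on $\sigma\{c\,|\omega|_\rho>\lambda\}$, cover each super-level set by Whitney caps exactly as in the proof of Lemma \ref{ghapp}, and split each cap's contribution into a leading term matched to the energy and an exceptional term made summable by the $(\log M)^{1-n}$ decay; re-summing over scales and integrating against $\psi'$ then recovers the right-hand side. The delicate point is the simultaneous bookkeeping of the exceptional sets across all scales, ensuring their total $\psi$-weighted contribution remains finite.
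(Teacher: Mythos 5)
Your overall architecture is in fact the same as the paper's: the paper also runs a Cavalieri argument on the level sets of $|\omega|_\rho$, Whitney-decomposes $U(\lambda)=\{\omega:\rho^*(\omega)>\lambda\}$ into caps $S_{x_j}$, and applies Lemma \ref{moduluslemma2} cap by cap, with Gehring--Hayman identifying $d_\rho(\omega,x_j)$ with the radial tail. Your dyadic reduction even delivers, essentially for free, what the paper proves as its first step: with $v(\omega)=\sup_{x\in\Gamma(\omega)}\rho(x)(1-|x|)$, property HI gives $v\approx\sup_k a_k$, so $\psi(cv)\le\sup_k\psi(Ca_k)\lesssim\sum_k\psi(a_k)$ by doubling, and hence $\int_{\Sn}\psi(cv)\,d\sigma$ is controlled by the energy. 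But the proposal stops exactly where the real work begins, and the one mechanism you offer there does not work.

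Two concrete gaps. First, the per-cap application of Lemma \ref{moduluslemma2} requires a preliminary splitting of the sphere by $v$: fixing $\gamma=\lambda/((M+1)C)$ and restricting to $\{v\le\gamma\}$, the Whitney distance property plus HI give both $|x_j|_\rho\le\lambda+C\gamma$ (via a chain of Whitney balls to a nearby $\omega'\notin U(\lambda)$) and $\rho(x_j)(1-|x_j|)\le C\gamma$; only with these two facts does $|\omega|_\rho>2\lambda$ force $d_\rho(\omega,x_j)>M\rho(x_j)(1-|x_j|)$, which is what the modulus lemma needs. Your sketch never makes this link, and without it the ``leading term'' of your decomposition is not identified (it is precisely $\sigma\{v>\gamma\}$, whose $\psi'$-integral is $\int_{\Sn}\psi((M+1)Cv)\,d\sigma$, finite by the step above). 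Second, and more fundamentally, the exceptional term is not ``made summable by the $(\log M)^{1-n}$ decay'': for fixed $M$ that factor is a constant, not a decaying sequence, and after integrating $\psi'(\lambda)\cdot(\log M)^{1-n}\sigma(U(\lambda))$ in $\lambda$ it equals $(\log M)^{1-n}\int_{\Sn}\psi(c\rho^*)\,d\sigma$ --- a quantity comparable to the very integral you are estimating, and a priori infinite. The paper resolves this by \emph{absorption}, not summation: Lemma \ref{nontanglemma} together with doubling bounds the $\rho^*$-integral by a constant multiple of $\int_{\Sn}\psi(c\,|\omega|_\rho)\,d\sigma$, one chooses $M$ so large that the resulting coefficient is less than $1/2$, and subtracts this term from the left-hand side. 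Since both sides could be infinite, the subtraction is legitimate only after regularizing: the paper applies the whole inequality to the dilated densities $\rho_t(x)=\rho(tx)$, for which all quantities are finite, and lets $t\to1$. This absorption-plus-regularization step is the heart of the proof, and it is exactly the point your proposal defers as ``delicate bookkeeping'' without supplying a mechanism.
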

\begin{proof}
Our first step is to show that the given assumptions imply that 
\begin{eqnarray}\label{finiteintegral}
\int_{\Sn}\psi(v(\omega))d\sigma < \infty,
\end{eqnarray} 
where $v(\omega) = \sup_{x\in \Gamma(\omega)}(\rho(x)(1-|x|))$. To that end, fix $\omega \in \Sn$ and let $x\in \Gamma(\omega)$. Then there is a constant $C = C(A,n, C_\psi)$
such that
\begin{eqnarray*}
\psi(\rho(x)(1-|x|)) &\leq&\frac{C}{(1-|x|)^n}\int\limits_{B_x} \psi(\rho(y)(1-|y|))dy \\&\leq& C\int\limits_{\Gamma(\omega)} \frac{\psi(\rho(y)(1-|y|))}{(1-|y|)^n}dy.
\end{eqnarray*}
Thus,
\begin{eqnarray*}
\int_{\Sn}\psi(v(\omega))d\sigma \leq C\int_{\Sn}\int\limits_{\Gamma(\omega)} \frac{\psi(\rho(y)(1-|y|))}{(1-|y|)^n}dy,
\end{eqnarray*}
and so it is enough to show that the integral on the right is finite.
If $u(y) = \frac{\psi(\rho(y)(1-|y|))} {1-|y|}$, then by the assumptions $u$ is integrable on $\Bn$ and Fubini's Theorem gives
\begin{eqnarray*}
\int_{\Sn} \int_{\Gamma(\omega)} u(y) (1-|y|)^{1-n} dy d\sigma &=& \int_{\Bn} u(y)(1-|y|)^{1-n}\int_{\Sn}\chi_{\Gamma(\omega)}(y) d\sigma dy \\&\approx& \int_{\Bn} u(y) dy < \infty,
\end{eqnarray*}
which completes the first step.

We now use (\ref{finiteintegral}) to show that $\psi(|\omega|_\rho) \in L^1(\Sn)$. Let $U(\lambda) = \{\omega \in \Sn : \rho^*(\omega) > \lambda \}$ for each $\lambda > 0$. Since $U(\lambda)$ is an open set we can use the generalized form of the Whitney decomposition to express $U(\lambda)$ as a union of caps $S_{x_j}$
\begin{eqnarray*}
U(\lambda) = \bigcup S_{x_j},
\end{eqnarray*}
where the caps have uniformly bounded overlap and 
\begin{eqnarray}\label{whitdist}
(1-|x_j|)/C \leq d(S_{x_j}, \partial U(\lambda)) \leq C (1-|x_j|).
\end{eqnarray}

If $\omega \in S_{x_j}$ and $v(\omega) \leq \gamma$ then by (\ref{whitdist}) and property HI(A) there exists $\omega' \in \Sn \setminus U(\lambda)$ and a corresponding $x_j' \in \Gamma(\omega')$ such that
\begin{eqnarray*}
|x_j|_\rho \leq d_\rho(x_j, x_j') + |x_j'|_\rho \leq C\gamma + \lambda.
\end{eqnarray*}
Now let $M >1$ and $\gamma= \frac{\lambda}{(M+1)C}$ and suppose $\omega \in S_{x_j}$ with $v(\omega) \leq \gamma$ and  $|\omega|_\rho > 2\lambda$. Then, by what we showed above and the definition of $v(\omega)$,
\begin{eqnarray*}
d_\rho(\omega, x_j) \geq |\omega|_\rho - |x_j|_\rho > \lambda - C\gamma = MC\gamma \geq M\rho(x_j)(1-|x_j|),
\end{eqnarray*}
and therefore
\begin{gather*}
\sigma(\{\omega\in S_{x_j}: |\omega|_\rho > 2\lambda\; \textnormal{and}\; v(\omega) \leq \gamma \}) \\ \leq
\sigma(\{\omega\in S_{x_j}: d_\rho(\omega, x_j)  > M\rho(x_j)(1-|x_j|) \}) \\
\leq C\sigma (S_{x_j}) (\log M)^{1-n}
\end{gather*}
by Lemma \ref{moduluslemma2}. If $|\omega|_\rho > 2\lambda$ then $\omega \in U(\lambda)$, and so by the above we have
\begin{gather*}
\sigma(\{\omega \in \Sn: |\omega|_\rho > 2\lambda \} \\ \leq \sigma (\{\omega \in U(\lambda): |\omega|_\rho > 2\lambda \; \textnormal{and}\; v(\omega) \leq \gamma \}) + \sigma (\{\omega \in \Sn: v(\omega) > \gamma \}) \\ \leq C\sum_j \sigma (S_{x_j}) (\log M)^{1-n} + \sigma (\{\omega \in \Sn: v(\omega) > \gamma \}) \\ \leq C\sigma (U(\lambda))(\log M)^{1-n} + \sigma (\{\omega \in \Sn: v(\omega) > \gamma \}).
\end{gather*} 
Thus
\begin{eqnarray*}
\int_{\Sn} \psi(\frac{1}{2}|\omega|_\rho) d\sigma = \int_0^\infty \psi'(\lambda) \sigma(\{\omega \in \Sn: |\omega|_\rho > 2\lambda \})d\lambda \\ \leq \int_0^\infty \psi'(\lambda) \left(C\sigma (U(\lambda))(\log M)^{1-n} + \sigma \left(\left\{\omega \in \Sn: v(\omega) > \frac{\lambda}{(M+1)C} \right\}\right)\right)d\lambda \\ = C(\log M)^{1-n}\int_{\Sn}\psi(\rho^*(\omega))d\sigma + \int_{\Sn} \psi((M+1)C v(\omega)) d\sigma \\ \leq C(n,A)(\log M)^{1-n}\int_{\Sn}\psi(\rho^*(\omega))d\sigma  + C(M, n,A, C_\psi)\int_{\Sn} \psi(v(\omega))d\sigma,
\end{eqnarray*}
where $C_\psi$ is the doubling constant of $\psi$. We would like to use Lemma \ref{nontanglemma} to bring the integral involving $\rho^*$ to the left side of the inequality, but since both integrals could be infinite we first apply Lemma \ref{nontanglemma} to the above for the conformal densities $\rho_t(x) = \rho(tx)$, $0 < t < 1$. By choosing $M$ large enough and taking the limit as $t \rightarrow 1$ we obtain
\begin{eqnarray*}
\int_{\Sn} \psi(|\omega|_\rho) d\sigma \leq C(M, n,A, C_\psi)\int_{\Sn} \psi(v(\omega))d\sigma,
\end{eqnarray*}
which completes the proof.
\end{proof}
A full converse to Lemma \ref{doubling} is not possible, as the following example shows. Let $p(x) \equiv 1$ on $\Bn$. Then $\rho$ is clearly a conformal density and also $\psi(|\omega|_\rho) \in L^1(\Sn)$ for every growth function $\psi$. If
\begin{eqnarray}\label{example}
\psi(t) =
\begin{cases}
\frac{1}{\log\frac{1}{t}}, & t<1/2 \\
\frac{2t}{\log2}, & t \geq 1/2,
\end{cases}
\end{eqnarray}
then $\psi$ is a growth function that is doubling, while
\begin{eqnarray*}
\int_{\Bn} \psi(\rho(x)(1-|x|)) \frac{dx}{1-|x|} = C + C\int_{1/2}^1\frac{1}{(1-r)\log\frac{1}{1-r}}dr
\end{eqnarray*}
is infinite. Assuming superadditivity or concavity plus an additional growth restriction on $\psi(t)$ near $t=0$ we obtain converses in the following forms.
\begin{lemma}\label{convex}
Let $\psi$ be a growth function such that $\psi(t_1) + \psi(t_2) \leq \psi(t_1 + t_2)$ for all $t_1,t_2 \in [0, \infty]$. If $\rho \in H^\psi$ then there is $\delta>0$ such that
\begin{eqnarray*}
\int_{\Bn} \psi(\delta\rho(x)(1-|x|))\frac{dx}{1-|x|} < \infty.
\end{eqnarray*}
\end{lemma}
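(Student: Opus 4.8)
The plan is to deduce the claim from characterization (2) of Theorem \ref{mainthm} together with the Gehring-Hayman theorem and the superadditivity of $\psi$. Since $\rho \in H^\psi$, Theorem \ref{mainthm} furnishes a $\delta_1 > 0$ with $\psi(\delta_1|\omega|_\rho) \in L^1(\Sn)$, so the goal is to choose $\delta$ as a fixed multiple of $\delta_1$ for which the energy integral is bounded by a constant times $\int_{\Sn}\psi(\delta_1|\omega|_\rho)\,d\sigma$.

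First I would pass to polar coordinates and write
\[
\int_{\Bn} \psi(\delta\rho(x)(1-|x|))\frac{dx}{1-|x|} = \int_{\Sn}\int_0^1 \psi(\delta\rho(t\omega)(1-t))\frac{t^{n-1}}{1-t}\,dt\,d\sigma(\omega),
\]
and analyze the inner radial integral for fixed $\omega$. Splitting $[1/2,1)$ into the dyadic shells $I_k = [1-2^{-k},\,1-2^{-(k+1)}]$ and setting $r_k = 1-2^{-k}$, property HI($A$) makes $\rho(t\omega)$ comparable to $\rho(r_k\omega)$ on $I_k$ (chaining at most a couple of Whitney balls), while $1-t \approx 2^{-k}$ and $\int_{I_k}\frac{dt}{1-t} = \log 2$. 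Monotonicity of $\psi$ then bounds the integral over $I_k$ by a constant multiple of $\psi(\delta C_0\rho(r_k\omega)2^{-k})$ with $C_0 = C_0(A)$, and the part $t\le 1/2$ contributes a harmless constant since $\rho$ is bounded on $\{|x|\le 1/2\}$.

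The crux is the next step. By superadditivity, $\sum_k \psi(\delta C_0\rho(r_k\omega)2^{-k}) \le \psi\bigl(\delta C_0\sum_k \rho(r_k\omega)2^{-k}\bigr)$, collapsing the sum over scales into a single value of $\psi$. Since $\sum_k \rho(r_k\omega)2^{-k}$ is comparable to $\int_0^1\rho(t\omega)\,dt = \textnormal{length}_\rho([0,\omega))$ and the radial segment is the hyperbolic geodesic joining $0$ and $\omega$, the Gehring-Hayman theorem gives $\textnormal{length}_\rho([0,\omega)) \le C|\omega|_\rho$. Applying monotonicity of $\psi$ once more, the inner integral is at most a constant plus $C\,\psi(\delta C'|\omega|_\rho)$ for some $C' = C'(A,B,n)$. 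Integrating over $\Sn$ yields
\[
\int_{\Bn} \psi(\delta\rho(x)(1-|x|))\frac{dx}{1-|x|} \le C + C\int_{\Sn}\psi(\delta C'|\omega|_\rho)\,d\sigma,
\]
and choosing $\delta = \delta_1/C'$ makes the right-hand side finite.

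I expect the main obstacle to be organizing the dyadic Harnack comparison so that every multiplicative constant is absorbed into the single dilation $C'$ inside $\psi$: superadditivity only lets one move such factors freely \emph{after} the collapse, so the monotonicity, superadditivity, and Gehring-Hayman steps must be applied in exactly this order. The superadditivity step is precisely where the hypothesis on $\psi$ enters, playing the role that the doubling condition played in Lemma \ref{doubling}.
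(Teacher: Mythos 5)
Your proposal is correct and follows essentially the same route as the paper's proof: polar coordinates, a dyadic discretization of the radial integral via HI($A$), superadditivity to collapse the sum into a single value of $\psi$, comparison of the sum with $\int_0^1\rho(t\omega)\,dt$, the Gehring-Hayman theorem to pass to $|\omega|_\rho$, and finally Theorem \ref{mainthm}(2) with $\delta$ chosen to absorb the constant. Your write-up merely makes explicit the dyadic points $t_j$ and the near-origin contribution that the paper leaves implicit.
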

\begin{proof}
Let $\delta > 0$. By switching to polar coordinates, applying property HI$(A)$, the superadditivity of $\psi$ and the Gehring-Hayman theorem we have that
\begin{eqnarray*}
\int_{\Bn} \psi(\delta\rho(x)(1-|x|))\frac{dx}{1-|x|} &\leq& \int_{\Sn}\int_0^1\frac{\psi(\delta\rho(t\omega)(1-t))}{1-t} dtd\sigma\\  &\leq& \int_{\Sn}C_1\sum_j \psi(\delta C_2\rho(t_j\omega)(1-t_j)) d\sigma \\  &\leq& \int_{\Sn}C_1\psi\left(\sum_j \delta C_2\rho(t_j\omega)(1-t_j)\right) \\  &\leq& \int_{\Sn}C_1\psi\left(\delta C_3\int_0^1 \rho(t\omega)dt\right)d\sigma \\  &\leq& \int_{\Sn}C_1\psi(\delta C_4 |\omega|_\rho)d\sigma.
\end{eqnarray*}
The last integral is finite for an appropriately chosen $\delta$ by Theorem \ref{mainthm}.
\end{proof}
Note that the growth function from (\ref{example}) does not satisfy the multiplicative assumption in the next lemma.
\begin{lemma}\label{concave}
Let $\psi$ be a growth function that is concave and for which there exists $C > 0$ such that $\psi(ab) \leq b\psi(Ca)$ whenever $a\geq 0$ and $0 < b < 1$.  Then if $\rho \in H^\psi$ then
\begin{eqnarray*}
\int_{\Bn} \psi(\rho(x)(1-|x|))\frac{dx}{1-|x|} < \infty.
\end{eqnarray*}
\end{lemma}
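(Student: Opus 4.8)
The plan is to follow the same reduction as in the proof of Lemma~\ref{convex}, and to replace the one place where that argument used superadditivity by an estimate built on the multiplicative growth restriction. First I would pass to polar coordinates and use $t^{n-1}\le 1$ to get
\[
\int_{\Bn}\psi(\rho(x)(1-|x|))\frac{dx}{1-|x|}\le\int_{\Sn}\int_0^1\frac{\psi(\rho(t\omega)(1-t))}{1-t}\,dt\,d\sigma .
\]
For each fixed $\omega$ I would split $[0,1)$ into the dyadic intervals $I_j=[1-2^{-j},1-2^{-j-1}]$ and pick $t_j\in I_j$. On $I_j$ property HI$(A)$ together with $1-t\approx 2^{-j}$ gives $\rho(t\omega)(1-t)\le C_2\,a_j$ with $a_j:=\rho(t_j\omega)(1-t_j)$, while $\int_{I_j}\frac{dt}{1-t}=\log 2$, so by monotonicity of $\psi$ one obtains $\int_0^1\frac{\psi(\rho(t\omega)(1-t))}{1-t}\,dt\le C_1\sum_j\psi(C_2 a_j)$. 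The same HI$(A)$ comparison and the Gehring-Hayman theorem give $S:=\sum_j a_j\approx\int_0^1\rho(t\omega)\,dt\approx|\omega|_\rho$; in particular, since $\rho\in H^\psi$ the growth restriction forces $\psi$ to be unbounded, whence $|\omega|_\rho<\infty$ and $S<\infty$ for $\sigma$-a.e.\ $\omega$, and the estimate need only be carried out on that full-measure set.

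The decisive step, and the one I expect to be the main obstacle, is bounding $\sum_j\psi(C_2 a_j)$ by a single value of $\psi$. For superadditive $\psi$ this was immediate from $\sum_j\psi(C_2 a_j)\le\psi(C_2 S)$, but a concave $\psi$ is subadditive and that bound is false. Instead I would set $b_j:=a_j/S\in(0,1)$, so that $\sum_j b_j=1$ and each $b_j<1$ (all $a_j>0$ since $\rho>0$, and there are infinitely many terms), and apply the hypothesis $\psi(ab)\le b\psi(Ca)$ with $a=C_2S$ and $b=b_j$. This yields $\psi(C_2 a_j)=\psi((C_2S)b_j)\le b_j\,\psi(CC_2 S)$, and summing over $j$ gives $\sum_j\psi(C_2 a_j)\le\psi(CC_2 S)\le\psi(C_4|\omega|_\rho)$ for a constant $C_4=C_4(A,B,n,C)$, the last step using $S\le C\,|\omega|_\rho$ and monotonicity. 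This is exactly where the multiplicative growth restriction is indispensable: it converts the (incorrect) superadditive bound into a correct one. Integrating over $\Sn$ then produces $\int_{\Bn}\psi(\rho(x)(1-|x|))\frac{dx}{1-|x|}\le C_1\int_{\Sn}\psi(C_4|\omega|_\rho)\,d\sigma$.

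Finally I would use concavity to absorb the constant $C_4$. Since $\rho\in H^\psi$, Theorem~\ref{mainthm} furnishes $\delta_1>0$ with $\int_{\Sn}\psi(\delta_1|\omega|_\rho)\,d\sigma<\infty$. If $C_4\le\delta_1$ the conclusion follows from monotonicity of $\psi$; otherwise, since $\psi$ is concave with $\psi(0)=0$ the map $t\mapsto\psi(t)/t$ is nonincreasing, so $\psi(C_4 s)\le(C_4/\delta_1)\,\psi(\delta_1 s)$ for every $s\ge 0$. In either case $\int_{\Sn}\psi(C_4|\omega|_\rho)\,d\sigma<\infty$, which finishes the proof. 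The remaining details are routine: the dyadic HI$(A)$ comparison may require subdividing each $I_j$ into a bounded number of Whitney-scale pieces and chaining the Harnack inequality finitely many times, and the null set $\{|\omega|_\rho=\infty\}$ contributes nothing to the integral.
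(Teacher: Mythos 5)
Your proof is correct, but it takes a genuinely different route from the paper's. The paper's argument for Lemma \ref{concave} is continuous and very short: for fixed $\omega$ it applies Jensen's inequality for the concave $\psi$ with respect to the normalized measure $\frac{1}{L_r}\frac{dt}{1-t}$ on $[0,r]$, where $L_r=\log\frac{1}{1-r}$, and then one application of the multiplicative hypothesis with $b=1/L_r$ to remove the normalization, giving $\int_0^r\psi(\rho(t\omega)(1-t))\frac{dt}{1-t}\le\psi\left(C\int_0^r\rho(t\omega)\,dt\right)$ in a single stroke; Gehring--Hayman and Theorem \ref{mainthm} then finish exactly as in your argument. You instead discretize: you reuse the dyadic decomposition from the proof of Lemma \ref{convex} and replace superadditivity by the multiplicative hypothesis applied with the self-normalized weights $b_j=a_j/S$, $\sum_j b_j=1$, so that $\sum_j\psi(C_2a_j)\le\psi(CC_2S)$. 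This is a valid substitute, and it exposes a structural point the paper's Jensen-based proof hides: the multiplicative hypothesis alone drives the main estimate, while concavity is needed only to absorb the final constant (via the monotonicity of $t\mapsto\psi(t)/t$) --- indeed a doubling assumption would serve equally well there. Your version also makes explicit two details the paper glosses over: the a.e.\ finiteness of $S\approx|\omega|_\rho$ (without which the weights $b_j$ are meaningless, and which you correctly derive from Theorem \ref{mainthm} plus the forced unboundedness of $\psi$), and the fact that Theorem \ref{mainthm} yields integrability of $\psi(\delta_1|\omega|_\rho)$ only for \emph{some} $\delta_1>0$, so that the constant $C_4$ genuinely must be absorbed. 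The price is length: the continuous Jensen argument reaches the same radial estimate in one step, with no dyadic machinery and no case analysis on $b_j<1$.
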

\begin{proof}
By Jensen's inequality and the multiplicative property of $\psi$ to get that 
\begin{eqnarray*}
\psi^{-1}\left(\int_0^1 \frac{\psi(\rho(t\omega)(1-t))}{1-t} dt \right)  &=& \lim_{r\rightarrow1}\psi^{-1}\left(\int_0^r \frac{\psi(\rho(t\omega)(1-t))}{1-t} dt \right)\\&\leq& \int_0^1 C\rho(t\omega)dt.
\end{eqnarray*}
Then by the Gehring-Hayman theorem 
\begin{eqnarray*}
\int_{\Bn} \psi(\rho(x)(1-|x|))\frac{dx}{1-|x|} &\leq& \int_{\Sn}\int_0^1\frac{\psi(\rho(t\omega)(1-t))}{1-t} dtd\sigma\\  &\leq& \int_{\Sn}\psi\left(C\int_0^1 \rho(t\omega)dt\right)d\sigma  \\  &\leq& \int_{\Sn}\psi( C |\omega|_\rho)d\sigma,
\end{eqnarray*}
which is finite by Theorem \ref{mainthm}. 
\end{proof}
\begin{proof}[Proof of Theorem \ref{energychar}]
This follows clearly from Lemmas \ref{doubling}, \ref{convex} and \ref{concave}.
\end{proof}
In \cite{myfirstpaper} the authors proved that if $f:\Bt\rightarrow\C$ is conformal, then $f$ belongs to the classical Hardy space $H^p$ if and only if 
\begin{eqnarray*}
\sup_{0<r<1} \int_{\mathbb{S}^1}|f(r\omega)|_I^pd\sigma < \infty
\end{eqnarray*}
for all $0 < p < \infty$, which in terms of our notation here is equivalent with saying that the conformal density $|f'|$ belongs to $H^p$. Thus the result from the classical setting stated in (\ref{classicalresult}) follows also by Theorem \ref{energychar}. The main tools needed in the result from \cite{myfirstpaper} and in the proof of Theorem \ref{energychar} are modulus of curve families and the use of properties of conformal densities, and so this new proof of (\ref{classicalresult}) is shorter and less technical than those in \cite{hpqc} and \cite{dirichlet}. The earlier proofs relied on, for example, the use of Carleson measures in the case of \cite{hpqc} and on the use of several older theorems including that from Pommerenke \cite{gerpomm}, Hayman \cite{multivalent} and two Hardy-Littlewood inequalities in the case of \cite{dirichlet} We note that especially in one direction, the proofs in the setting of conformal densities are very straightforward.
\begin{proof}[Proof of Theorem \ref{everydensity}]
Let $\rho$ be a conformal density on $\Bn$. By \cite[Theorem 5.1]{conformalmetrics} there exist constants $\beta(B,n) > 1, C_1(A,B,n), C_2(A,B,n)$ so that
\begin{eqnarray*}
C_1(1-|x|)^{\beta - 1} \leq \frac{\rho(x)}{\rho(0)}\leq C_2 \frac{1}{(1-|x|)^{\beta - 1}}
\end{eqnarray*}
for every $x\in\Bn$. Since by the Gehring-Hayman theorem 
\begin{eqnarray*}
M(r,\rho) \approx \sup_{\omega \in \Sn}\int_0^r \rho(t\omega)dt
\end{eqnarray*}
for all $0 < r < 1$, there exists some constant $C >0$ such that
\begin{eqnarray*}
M(r,\rho) \leq C (1-r)^{-\beta}.
\end{eqnarray*}
Thus
\begin{eqnarray*}
\int_0^1 (1-r)^{n-2}M(r,\rho)^p dr
\end{eqnarray*}
is finite for any $p < \frac{n-1}{\beta}$ and $\rho \in H^p$ for all such $p$ by Theorem \ref{mainthm}.
\end{proof}

 \noindent\textit{Acknowledgement.} This paper forms a part of the thesis of the author, written under the supervision of Pekka Koskela. 

\bibliographystyle{plain}
\bibliography{secondpaperref}

\noindent  Sita Benedict\\
Department of Mathematics and Statistics\\
University of Jyv\"{a}skyl\"{a}\\
P.O. Box 35 (MaD)\\
FI-40014\\
Finland\\

\noindent{\it E-mail address}: \texttt{sita.c.benedict@jyu.fi}

\end{document}